\documentclass[pdflatex,sn-mathphys-num]{sn-jnl}

\usepackage{amsmath}
\usepackage{amsthm}
\usepackage{amssymb}
\usepackage{mathtools}
\usepackage{enumitem}
\usepackage{graphicx}
\usepackage[dvipsnames]{xcolor}
\usepackage{tikz}
\usetikzlibrary{arrows.meta}
\usepackage{hyperref}

\hypersetup{
  colorlinks=true,
  linkcolor=MidnightBlue,
  citecolor=BrickRed,
  urlcolor=MidnightBlue,
  pdfauthor={A. Bahayou},
  pdftitle={Toric geometry of Schouten squares on rotational double extensions}
}

\numberwithin{equation}{section}

\newtheorem{theorem}{Theorem}[section]
\newtheorem{lemma}[theorem]{Lemma}
\newtheorem{proposition}[theorem]{Proposition}
\newtheorem{corollary}[theorem]{Corollary}
\theoremstyle{definition}
\newtheorem{definition}[theorem]{Definition}
\newtheorem{example}[theorem]{Example}
\theoremstyle{remark}
\newtheorem{remark}[theorem]{Remark}

\newcommand{\g}{\mathfrak{g}}
\newcommand{\fb}{\mathfrak{b}}
\newcommand{\fz}{\mathfrak{z}}
\newcommand{\Rone}{\mathcal{R}^{(1)}}
\newcommand{\Rzero}{\mathcal{R}^{0}}
\newcommand{\Rplus}{\mathcal{R}^{+}}
\DeclareMathOperator{\Aut}{Aut}
\DeclareMathOperator{\Newt}{Newt}
\DeclareMathOperator{\conv}{conv}
\newcommand{\spanQ}{\operatorname{span}_{\mathbb{Q}}}
\newcommand{\spanZ}{\operatorname{span}_{\mathbb{Z}}}
\newcommand{\spanR}{\operatorname{span}_{\mathbb{R}}}
\DeclareMathOperator{\rk}{rank}
\DeclareMathOperator{\tr}{tr}
\DeclareMathOperator{\diag}{diag}
\DeclareMathOperator{\pr}{pr}
\newcommand{\hh}{\widehat{h}}

\title[Toric geometry of Schouten squares]{Toric geometry of Schouten squares on rotational double extensions}
\author*[1]{\fnm{A.} \sur{Bahayou}}\email{bahayou.amine@univ-ouargla.dz}

\affil*[1]{\orgdiv{Laboratory of Mathematics and Applications},
\orgname{Kasdi Merbah University},
\orgaddress{\postcode{30000}, \city{Ouargla}, \country{Algeria}}}

\begin{document}

\abstract{Let $\g=\fb\oplus V\oplus\fb^{*}$ be a Medina-Revoy rotational double extension in which an abelian Lie algebra $\fb$ acts by scalar rotations on an orthogonal sum of oriented Euclidean two-planes. We analyse the restriction of the Schouten square $r\mapsto[r,r]$ to the block-rank-one locus $\Rone\subset\fb\wedge V$ and to its nondegenerate open part $\Rplus$. On every marked slice $S_{H}\subset\Rplus$ the Schouten square splits orthogonally into a radial and an angular component. The radial component factors through the standard toric moment map: its image is a simplicial cone, it satisfies a sharp quadratic coercive estimate with an intrinsic optimal constant, and the restricted Schouten map is proper with compact semialgebraic affine fibres. The angular component becomes Laurent monomial in joint eigencoordinates; a marker-cancellation criterion determines its signed exponent configuration, whose integral lattice controls the differential rank, the compact isotropy, its component group, and the Laurent binomial ideal defining the Zariski closure of the complexified image. Smith normal form makes each invariant algorithmic. All statements hold for arbitrary $\dim\fb$ and impose no nonresonance assumption. A corollary of the underlying disjoint-support decomposition classifies the classical Yang-Baxter equation on $\Rone$: for any bivector whose $\fb\wedge V$ component lies in $\Rone$, triangularity forces every block detected by its rotation weight to vanish, regardless of the $\Lambda^{2}\fb$, $\Lambda^{2}V$, or central-wedge terms.}
\keywords{Quadratic Lie algebra, double extension, Schouten bracket,
classical Yang-Baxter equation, moment map, algebraic torus,
binomial ideal}
\keywords{Quadratic Lie algebra, double extension, classical Yang-Baxter equation, moment map, algebraic torus, binomial ideal}
\pacs[MSC Classification]{Primary 17B38, 17B62; Secondary 17B30,
17B56, 53D17, 53D20, 14M25}
\maketitle

\section{Introduction}\label{sec:intro}

\subsection{The problem and its geometric setting}\label{sec:problem}

The classical Yang-Baxter equation
\begin{equation}\label{eq:cybe}
[r,r]=0,\qquad r\in\Lambda^{2}\g,
\end{equation}
is the algebraic constraint controlling triangular coboundary Lie bialgebras, triangular Poisson-Lie structures, and homogeneous Yang-Baxter deformations of sigma and Wess-Zumino-Witten models \cite{Drinfeld1983,SemenovTianShansky1983,LuWeinstein1990,KosmannSchwarzbach2004,ChariPressley1994,BorsatoWulff2019,HoareLacroix2020,Hoare2022}. For semisimple targets the Belavin-Drinfeld theorem organises solutions by root data \cite{BelavinDrinfeld1982}; on solvable quadratic Lie algebras no comparable framework exists, largely because the polynomial map $r\mapsto[r,r]$ couples several exterior supports and its image and fibres are seldom directly accessible.

This paper isolates a natural stratum on which the map decouples completely and becomes governed by toric geometry. Let
\begin{equation}\label{eq:g-decomp}
\g=\fb\oplus V\oplus\fb^{*}
\end{equation}
be the Medina-Revoy quadratic double extension \cite{MedinaRevoy1985,Bordemann1997} determined by an abelian Lie algebra $\fb$ acting by scalar rotations on an orthogonal sum $V=\bigoplus_{\ell=1}^{m}P_{\ell}$ of oriented Euclidean two-planes. Such algebras form a rotational subfamily of the metric extensions of Kath-Olbrich \cite{KathOlbrich2004,KathOlbrich2006}, of the general Manin-triple constructions surveyed in \cite{Ovando2016,DiattaMedina2004}, and encompass oscillator and Nappi-Witten algebras \cite{NappiWitten1993,BoucettaMedina2011,BenitoEtAl2024}. The mechanism at the heart of this paper is the planar identity

\[
u\wedge Ju=\|u\|^{2}\Omega \qquad (u\in P,\ J=\text{quarter-turn},\ \Omega=\text{area}),
\]
which, applied to a decomposable block $h\wedge u\in\fb\wedge P$, forces $[h\wedge u,h\wedge u]$ to a fixed ray in $\fb\wedge\Lambda^{2}P$. Different two-planes carry disjoint exterior supports; cross-plane brackets deposit only angular information. One Schouten calculation therefore separates cleanly into a radial toric moment map and an angular monomial map.

\subsection{Positive block-rank-one locus}\label{sec:locus}

Under the canonical identification $\fb\wedge P_{\ell}\cong\fb\otimes P_{\ell}$, consider bivectors whose projection to each block has tensor rank at most one. Every nonzero block $p_{\ell}=h_{\ell}\wedge u_{\ell}$ then determines an intrinsic marker line $\mathbb Rh_{\ell}\subset\fb$ and a free phase circle in $P_{\ell}$; when the marker escapes $\ker\lambda_{\ell}$ it acquires a canonical orientation via $\lambda_{\ell}(h_{\ell})>0$. Fixing the marker lines produces the \emph{marked slices}
\begin{equation}\label{eq:slice}
S_{H}=\bigoplus_{\ell=1}^{m}h_{\ell}\wedge P_{\ell},
\qquad \lambda_{\ell}(h_{\ell})>0.
\end{equation}
These slices form a semialgebraic cover of the positive block-rank-one locus $\Rplus$; on each fixed-support subset the marker rays are intrinsic, while along boundary faces where a plane component vanishes distinct slices meet.

The rank-one condition is not a technical convenience but a maximal one. In an oriented orthonormal basis $(e_{\ell},f_{\ell})$ a general block $x_{\ell}\wedge e_{\ell}+y_{\ell}\wedge f_{\ell}$ produces

\[
[p_{\ell},p_{\ell}]=2\bigl(\lambda_{\ell}(x_{\ell})x_{\ell}+\lambda_{\ell}(y_{\ell})y_{\ell}\bigr)\wedge\Omega_{\ell}
+2\lambda_{\ell}\wedge x_{\ell}\wedge y_{\ell},
\]
whose first summand no longer sits on a fixed ray and can vanish on nontrivial rank-two data, and whose second summand couples the block to the central sector $\fz\wedge\fb\wedge V$. Both effects break the simplicial moment cone and the coercive estimate simultaneously. The block-rank-one condition is thus the exact locus on which the toric picture holds; §\ref{sec:conclusion} makes precise how each ingredient degenerates beyond it.

\subsection{Main results}\label{sec:principal}

For $r=\sum_{\ell}h_{\ell}\wedge u_{\ell}\in S_{H}$ the planar identity gives the exact orthogonal decomposition
\begin{equation}\label{eq:decomp-intro}
[r,r]=\Phi_{H}(r)+\mu_{H}(r),\qquad
\Phi_{H}(r)=\sum_{\ell=1}^{m}\|u_{\ell}\|^{2}E_{\ell}(H),
\end{equation}
with radial generators $E_{\ell}(H)=2\lambda_{\ell}(h_{\ell})\,h_{\ell}\wedge\Omega_{\ell}$ occupying pairwise disjoint exterior supports. Three theorems organise the paper.

\smallskip
\noindent\textbf{Theorem A} (radial toric geometry; Theorem~\ref{thm:structure}).
\emph{The radial map $\Phi_{H}$ is an injective linear rescaling of the standard toric moment map for the $\mathbb T^{m}$-action by independent plane rotations. Its image is the simplicial cone
\[
C_{H}=\bigoplus_{\ell}\mathbb R_{\ge0}E_{\ell}(H),
\]
and its fibres are products of circles. For every positive-definite auxiliary metric on $\fb$,}
\begin{equation}\label{eq:coercive-intro}
\|\Phi_{H}(r)\|\ge \kappa_{0}(H)\|r\|^{2},\qquad
\kappa_{0}(H)=2\left(\sum_{\ell=1}^{m}
\frac{\|h_{\ell}\|^{2}}{\lambda_{\ell}(h_{\ell})^{2}}\right)^{-1/2},
\end{equation}
\emph{with equality on an explicit weighted-barycentric locus. The restricted Schouten map $\Sigma_{H}=[\,\cdot\,,\cdot\,]|_{S_{H}}$ is proper, its image is closed and semialgebraic, and every affine fibre is compact.}

\smallskip
\noindent\textbf{Theorem B} (monomial angular geometry; Theorem~\ref{thm:monomial}).
\emph{Fixing a radial value freezes the moduli $\|u_{\ell}\|$ and leaves a product of phase circles. In joint eigencoordinates the angular map is Laurent monomial,}
\[
(s_{\ell})_{\ell\in I}\longmapsto
\bigl(a_{\alpha}(H,Y)s^{\alpha}\bigr)_{\alpha\in A_{H,Y}},
\]
\emph{with signed exponent configuration $A_{H,Y}\subset\mathbb Z^{I}$ determined by the marker-cancellation criterion. Its integral lattice $L_{A}=\spanZ(A_{H,Y})$ controls the differential rank, the compact isotropy $K_{A}\subset\mathbb T^{I}$, and its component group $L_{A}^{\mathrm{sat}}/L_{A}$; the Zariski closure of the complexified image is the translated subtorus defined by the Laurent binomials associated with $\ker_{\mathbb Z}M_{A}$, and Smith normal form makes every invariant algorithmic.}

\smallskip
\noindent\textbf{Theorem C} (rank-one support rigidity; Theorem~\ref{thm:obstruction}).
\emph{Let $r\in\Lambda^{2}\g$ with $\fb\wedge V$-component $p\in\Rone$. If $[r,r]=0$, then $p\in\Rzero$; that is, every nonzero block $p_{\ell}=h_{\ell}\wedge u_{\ell}$ satisfies $\lambda_{\ell}(h_{\ell})=0$. In particular, if $p\in\Rplus$, then $p=0$, and this conclusion is insensitive to arbitrary $\Lambda^{2}\fb$, $\Lambda^{2}V$, and central-wedge components of $r$.}

\smallskip
Theorems A and B are the principal structural results and give the complete toric picture of the restricted Schouten map. Theorem C is logically parallel to, rather than dependent on, Theorem B: both are consequences of the disjoint-support mechanism underlying Theorem A. Its proof rests on the planewise identity \eqref{eq:planewise1} and on support-tracking of the $\Lambda^{2}\fb$, $\Lambda^{2}V$, and central-wedge components of $r$; it requires no eigencoordinates. Its ambient class is correspondingly larger than the marked slices analysed for A and B: the complementary components of $r$ are unconstrained. When $\dim\fb=1$ the rank-one condition is automatic and $\ker\lambda_{\ell}=0$, so every triangular tensor on an oscillator or Nappi-Witten algebra has zero $\fb\wedge V$ component, at every frequency pattern including resonance.

\subsection{Position in the literature}\label{sec:literature}

The moment-map and algebraic-torus tools employed below are classical \cite{Atiyah1982,GuilleminSternberg1982,CoxLittleSchenck2011,EisenbudSturmfels1996}; their role here is diagnostic and computational, exposing toric structure produced by the Schouten calculation itself. The new mathematical content lies in the disjoint-support decomposition \eqref{eq:decomp-intro}, the sharp radial estimate \eqref{eq:coercive-intro} with its intrinsic barycentric equality locus, the marker-cancellation criterion, and its translation into a finite lattice datum determining every fibre invariant.

For oscillator algebras with $\dim\fb=1$, Boucetta and Medina \cite{BoucettaMedina2011} classified triangular $r$-matrices under generic frequency hypotheses; related bialgebra and Poisson structures appear in \cite{BallesterosHerranz1996,BallesterosHerranz1997,AlbuquerqueEtAl2021}. Delorme's classification of Manin triples on quadratic Lie algebras \cite{Delorme2001} and further quantization results \cite{EtingofKazhdan1996,AndruskiewitschJancsa2015} provide the general Lie-bialgebraic context. In the specific $H_{4}=NW_{4}$ setting, the classification of inequivalent (modified) classical $r$-matrices and the calculation of the corresponding deformed backgrounds have been carried out in \cite{DemulderEtAl2021} using distinct methods. The rotational Medina-Revoy construction on which we operate belongs to the Kath-Olbrich framework of quadratic extensions \cite{KathOlbrich2004,KathOlbrich2006,Ovando2016}, and recent work on skew-adjoint endomorphisms and generalized oscillator algebras \cite{BenitoEtAl2024} provides a complementary structural viewpoint. The author's earlier work \cite{Bahayou2023,Bahayou2023arxiv} addresses the flat quotient $\fb\ltimes V$; here the quadratic central extension $[V,V]\subset\fb^{*}$ is retained and is essential to the exterior-support argument for Theorem~\ref{thm:obstruction}. Root-theoretic techniques for rational solutions on simple Lie algebras appear in \cite{Stolin1991}.

The present contribution differs from the classification programme along a controlled axis: it determines the image and fibres of the restricted polynomial map $[\,\cdot\,,\cdot\,]|_{S_H}$ and, as a corollary of the underlying disjoint-support decomposition, removes every genericity assumption from the vanishing of the $h\wedge V$ support of triangular tensors. Resonant equal-frequency blocks, excluded by the classical distinct-frequency hypothesis, are precisely where the exponent set $A_{H,Y}$ drops rank; Example~\ref{ex:resonant} exhibits this cancellation stratum. Within the oscillator setting our conclusion is the support projection of Boucetta-Medina's classification; on higher-dimensional rotational extensions it is new.

\subsection{Physical motivation}

Homogeneous Yang-Baxter deformations of WZW and sigma models take triangular skew tensors as their algebraic input \cite{BorsatoWulff2019,HoareLacroix2020,Hoare2022}. The Nappi-Witten model and its higher-dimensional oscillator analogues provide non-semisimple targets for which such deformations have been analysed directly \cite{KyonoYoshida2016,DemulderEtAl2021,IdiabVanTongeren2024}. Theorem C shows that, on any rotational Medina-Revoy target, the algebraic search for triangular tensors on $\Rone$ may be restricted from the outset to those with zero $h\wedge V$ component, at every frequency pattern. This is a support-level algebraic constraint, not a claim about integrability, conformal invariance, or equivalence of the residual deformations, which continue to require model-specific analysis.

\subsection{Organisation}\label{sec:organization}

Section~\ref{sec:construction} introduces the rotational datum, the block-rank-one loci, marked slices, metrics, and equivariance under the split automorphism group. Section~\ref{sec:structure} proves Theorem A and identifies the moment-map factorisation. Section~\ref{sec:monomial} develops the cancellation criterion, the exponent lattice, and the Smith-normal-form calculus, and proves Theorem B and the constant-rank stratification. Section~\ref{sec:obstruction} proves Theorem C by direct support tracking. Section~\ref{sec:twoplane} treats the complete two-plane trichotomy, gives a closed-form real angular fibre, and compares with the oscillator classification. Section~\ref{sec:conclusion} isolates the two structural obstacles to higher-rank extensions.

\section{Rotational double extensions and marked slices}\label{sec:construction}

\subsection{Conventions}\label{sec:conventions}

All vector spaces and Lie algebras are finite-dimensional over $\mathbb R$; complexifications carry the subscript $(\cdot)_{\mathbb C}$. The Schouten-Nijenhuis bracket on $\Lambda^{\bullet}\g$ is the unique biderivation extending the Lie bracket; it is symmetric on $\Lambda^{2}\g$. Inner products are written $\langle\cdot,\cdot\rangle$, with induced norms $\|\cdot\|$ on all exterior powers. We fix $\mathbb T=\mathbb R/2\pi\mathbb Z$ and, on each oriented Euclidean two-plane $P_{\ell}$, an oriented orthonormal basis $(e_{\ell},f_{\ell})=(e_{\ell},J_{\ell}e_{\ell})$ with area bivector $\Omega_{\ell}=e_{\ell}\wedge f_{\ell}$.

\subsection{The rotational quadratic double extension}\label{sec:doubleext}

\begin{definition}[Rotational datum]\label{def:datum}
A \emph{rotational datum} is a quadruple

\[
\mathcal{D}=(\fb,V,\{P_{\ell}\}_{\ell=1}^{m},\{\lambda_{\ell}\}_{\ell=1}^{m}),
\]
where $\fb$ is a real abelian Lie algebra, $V=\bigoplus_{\ell=1}^{m}P_{\ell}$ is an orthogonal direct sum of oriented Euclidean two-planes, and $\lambda_{\ell}\in\fb^{*}\setminus\{0\}$ are nonzero weights such that
\begin{equation}\label{eq:rotation}
\rho(b)\big|_{P_{\ell}}=\lambda_{\ell}(b)\,J_{\ell},\qquad b\in\fb,
\end{equation}
with $J_{\ell}$ the positive quarter-turn on $P_{\ell}$.
\end{definition}

Set $\fz=\fb^{*}$, viewed as a central summand, and define $\beta\colon\Lambda^{2}V\to\fz$ by
\begin{equation}\label{eq:beta}
\beta(u,v)(b)=\langle\rho(b)u,v\rangle_{V},\qquad u,v\in V,\ b\in\fb.
\end{equation}
On $\g=\fb\oplus V\oplus\fz$ define the bracket
\begin{equation}\label{eq:bracket}
[b,b']=0,\qquad [b,u]=\rho(b)u,\qquad [u,v]=\beta(u,v),\qquad [\fz,\g]=0.
\end{equation}

\begin{proposition}\label{prop:jacobi}
The bracket \eqref{eq:bracket} satisfies the Jacobi identity, and the symmetric form
\begin{equation}\label{eq:Bform}
B(b+u+\xi,\,b'+v+\eta)=\xi(b')+\eta(b)+\langle u,v\rangle_{V}
\end{equation}
is nondegenerate and $\mathrm{ad}$-invariant, so $(\g,B)$ is a quadratic Lie algebra in the sense of Medina-Revoy \cite{MedinaRevoy1985,Bordemann1997}. The quotient $\overline\g=\g/\fz\cong\fb\ltimes_{\rho}V$ is a flat metric Lie algebra in the sense of Milnor \cite{Milnor1976}.
\end{proposition}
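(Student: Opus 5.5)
The proof is a direct check of three things in turn: the Jacobi identity, nondegeneracy and $\mathrm{ad}$-invariance of $B$, and the structure of the quotient. Everything rests on two facts. First, each $\rho(b)$ is skew-symmetric on $V$, since by \eqref{eq:rotation} it acts on each $P_{\ell}$ as a multiple of the quarter-turn $J_{\ell}$. Second, $\rho$ is a representation of the abelian algebra $\fb$, since all $\rho(b)$ are block-diagonal with scalar multiples of $J_{\ell}$ in each block and hence commute. Skew-symmetry also shows that $\beta$ in \eqref{eq:beta} is alternating, so the bracket \eqref{eq:bracket} is well defined and skew.

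\textbf{Jacobi identity.} Trilinearity lets us check the Jacobiator on homogeneous triples from $\fb$, $V$, $\fz$.
\begin{itemize}
\item Any triple containing an element of $\fz$ vanishes, because $\fz$ is central and $[\g,\g]\subset V\oplus\fz$, on which brackets with $\fz$ vanish.
\item For $(b,b',u)$ the Jacobiator is $[\rho(b),\rho(b')]u=0$, since the $\rho(b)$ commute.
\item For $(b,b',b'')$ everything is zero.
\item For $(u,v,w)\in V^{3}$, each $[u,v]\in\fz$ is central, so the Jacobiator is zero.
\item The only nontrivial case is $(b,u,v)$. Here $[b,[u,v]]=0$ because $[u,v]\in\fz$. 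So we need $\beta(\rho(b)u,v)+\beta(u,\rho(b)v)=0$ in $\fz$. Evaluating at $b'\in\fb$, this reads
\[
\langle\rho(b')\rho(b)u,v\rangle+\langle\rho(b')u,\rho(b)v\rangle=0.
\]
This follows from skew-symmetry of $\rho(b)$ applied to the second term, together with the commutation $\rho(b)\rho(b')=\rho(b')\rho(b)$.
\end{itemize}

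\textbf{Nondegeneracy and invariance of $B$.} Nondegeneracy is immediate. In the splitting $\fb\oplus V\oplus\fz$, the form $B$ pairs $\fb$ with $\fz=\fb^{*}$ by the canonical duality and restricts to the Euclidean form on $V$. Its Gram matrix is therefore block anti-diagonal-plus-identity, hence invertible.

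For invariance, $B([x,y],z)=B(x,[y,z])$, we again check homogeneous triples. Both sides vanish unless the total degree matches the pairing pattern, i.e.\ unless one entry lies in $\fb$ and two lie in $V$. Triples with a $\fz$ entry or two $\fb$ entries give zero on both sides, using that $\fz$ is central and isotropic and that $\fb$ is abelian and isotropic. This leaves $(b,u,v)$ and its permutations. For example,
\[
B([b,u],v)=\langle\rho(b)u,v\rangle,
\qquad
B(b,[u,v])=\beta(u,v)(b)=\langle\rho(b)u,v\rangle
\]
by \eqref{eq:beta}. The other orderings reduce to the same identity via skew-symmetry of $\rho(b)$.

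\textbf{The quotient.} Since $\fz$ is a central ideal, $\g/\fz$ inherits the bracket $[b,u]=\rho(b)u$ with $[V,V]=0$ and $[\fb,\fb]=0$. This is exactly $\fb\ltimes_{\rho}V$. By Milnor's theorem \cite{Milnor1976}, a left-invariant metric is flat precisely when the algebra splits as an abelian subalgebra acting by skew-adjoint transformations on an abelian ideal, orthogonal to it. Here $\fb$ acts skew-adjointly on $V$, so choosing any inner product on $\fb$ and declaring $\fb\perp V$ yields a flat metric.

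The main obstacle, to the extent there is one, is the $(b,u,v)$ Jacobi case, which requires combining skew-symmetry with commutativity of the $\rho(b)$. Everything else is bookkeeping of degrees.
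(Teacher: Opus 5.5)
Your proposal is correct and follows essentially the same route as the paper. In the only nontrivial Jacobi case $(b,u,v)$ you reduce to skew-adjointness and commutativity of the operators $\rho(b)$, and you obtain invariance from the identity $B([b,u],v)=\langle\rho(b)u,v\rangle=\beta(u,v)(b)=B(b,[u,v])$. You also spell out several points the paper's proof leaves implicit: the $(b,b',u)$ Jacobi case, the nondegeneracy of $B$, the other orderings in the invariance check, and the appeal to Milnor's characterisation for flatness of $\fb\ltimes_{\rho}V$.
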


\begin{proof}
Brackets involving $\fz$ vanish by centrality; the $VVV$-Jacobi identity is trivial because $[V,V]\subset\fz$; and for $b\in\fb$, $u,v\in V$, the $\fb VV$-Jacobi identity reduces to commutativity of the $\rho(b)$ operators and their skew-adjointness, both immediate from \eqref{eq:rotation}. Invariance of $B$ follows from

\[
B([b,u],v)=\langle\rho(b)u,v\rangle=\beta(u,v)(b)=B(b,[u,v]).\qedhere
\]
\end{proof}

\subsection{Block-rank-one loci and marked slices}\label{sec:locus-def}

For $p\in\fb\wedge V$, write $p=\sum_{\ell=1}^{m}p_{\ell}$ according to $\fb\wedge V=\bigoplus_{\ell}(\fb\wedge P_{\ell})$ and use the canonical identification $\fb\wedge P_{\ell}\cong\fb\otimes P_{\ell}$.

\begin{definition}[Rank-one, null-marker, and positive loci]\label{def:locus}
The \emph{block-rank-one locus} is the semialgebraic subset

\[
\Rone=\bigl\{p\in\fb\wedge V:\rk(p_{\ell})\le 1\ \forall\ell\bigr\}.
\]
For every nonzero block $p_{\ell}$, write $p_{\ell}=h_{\ell}\wedge u_{\ell}$; the line $\mathbb Rh_{\ell}\subset\fb$ is independent of the factorisation and is called the \emph{marker line}. Define

\[
\begin{aligned}
\Rzero&=\bigl\{p\in\Rone:\mathbb Rh_{\ell}\subset\ker\lambda_{\ell}\ \forall\ p_{\ell}\ne 0\bigr\},\\
\Rplus&=\bigl\{p\in\Rone:\mathbb Rh_{\ell}\not\subset\ker\lambda_{\ell}\ \forall\ p_{\ell}\ne 0\bigr\}.
\end{aligned}
\]
The adjective ``positive'' records the canonical orientation of every non-null marker by $\lambda_{\ell}(h_{\ell})>0$.
\end{definition}

Each block of an element of $\Rplus$ admits a factorisation with $\lambda_{\ell}(h_{\ell})>0$, so the positive ray $\mathbb R_{>0}h_{\ell}$ is intrinsic. The locus $\Rone$ also contains mixed points, some of whose markers are null; Theorem~\ref{thm:obstruction} shows that triangularity purges all non-null blocks.

\begin{definition}[Positive marking and marked slice]\label{def:marking}
A \emph{positive marking} is a tuple $H=(h_{1},\dots,h_{m})\in\fb^{m}$ with $\lambda_{\ell}(h_{\ell})>0$ for every $\ell$. Its \emph{marked slice} is
\begin{equation}\label{eq:slice2}
S_{H}=\bigoplus_{\ell=1}^{m}h_{\ell}\wedge P_{\ell}
=\Bigl\{\sum_{\ell}h_{\ell}\wedge u_{\ell}:u_{\ell}\in P_{\ell}\Bigr\}\subset\Lambda^{2}\g.
\end{equation}
The \emph{marking space} is

\[
\mathcal P=\bigl\{H\in\fb^{m}:\lambda_{\ell}(h_{\ell})>0\ \forall\ell\bigr\}.
\]
\end{definition}

The family $\{S_{H}\}_{H\in\mathcal P}$ is a semialgebraic linear cover of $\Rplus$: on the maximal support open set two markings determine the same slice exactly when their marker lines coincide plane by plane, while along boundary faces where some block vanishes distinct slices intersect. This cover is compatible with the fixed-support decomposition of $\Rplus$ but is not a stratification in the topological sense.

\begin{remark}[Dimension of the cover]\label{rem:size}
Let $d=\dim\fb$. In each block $\fb\otimes P_{\ell}$, the rank-at-most-one determinantal cone has dimension $d+1$: its nonzero part is parametrised by pairs $(\fb\setminus\{0\})\times(P_{\ell}\setminus\{0\})$ modulo a single scalar rescaling. Hence $\Rone$ has maximal dimension $m(d+1)$ and codimension $m(d-1)$ inside $\fb\wedge V$, while a fixed marked slice has dimension $2m$ and codimension $2m(d-1)$. The positivity condition defining $\Rplus$ is relatively open on every full-support subset.
\end{remark}

\begin{remark}[Naturality of marked slices]\label{rem:why}
The subspace $S_{H}$ is the smallest linear, $\mathbb T^{m}$-invariant support class that fixes one marker line in $\fb$ per plane while retaining the full phase circle in every plane. The identity $u_{\ell}\wedge J_{\ell}u_{\ell}=\|u_{\ell}\|^{2}\Omega_{\ell}$ places each planewise self-interaction on its own exterior support $\fb\wedge\Lambda^{2}P_{\ell}$. This disjoint-support property drives both the coercivity theorem and the Yang-Baxter support rigidity theorem. The nondegeneracy condition is sharp: if $h_{k}\ne 0$ and $\lambda_{k}(h_{k})=0$, then $[h_{k}\wedge u_{k},h_{k}\wedge u_{k}]=0$ for every $u_{k}$, and no positive radial lower bound can detect that block.
\end{remark}

For $H\in\mathcal P$ define
\begin{align}
E_{\ell}(H)&=2\lambda_{\ell}(h_{\ell})\,h_{\ell}\wedge\Omega_{\ell},\label{eq:El}\\
D_{H}&=\bigoplus_{\ell=1}^{m}\mathbb R E_{\ell}(H),\label{eq:DH}\\
C_{H}&=\Bigl\{\sum_{\ell}t_{\ell}E_{\ell}(H):t_{\ell}\ge 0\Bigr\},\label{eq:CH}\\
X_{H}&=\bigoplus_{1\le i<j\le m}\fb\wedge P_{i}\wedge P_{j}.\label{eq:XH}
\end{align}
Because $E_{\ell}(H)\in\fb\wedge\Lambda^{2}P_{\ell}$ and these summands are pairwise disjoint, the vectors $E_{\ell}(H)$ are linearly independent. Consequently $C_{H}$ is a closed simplicial cone with $m$ extreme rays.

\subsection{Metrics and the canonical trace form}\label{sec:metrics}

Fix a positive-definite inner product $\langle\cdot,\cdot\rangle_{c}$ on $\fb$; use the dual inner product on $\fz$ and declare $\fb$, $V$, $\fz$ mutually orthogonal. Induced norms on exterior powers are denoted $\|\cdot\|$. This auxiliary metric enters only the numerical constants and their equality loci; the direct-sum decomposition \eqref{eq:decomp-intro}, the simplicial cone $C_{H}$, the fibres, properness, and the support obstruction of Theorem~\ref{thm:obstruction} are independent of its choice.

The rotational datum itself determines a canonical symmetric bilinear form
\begin{equation}\label{eq:trace}
\gamma_{\mathcal{D}}(b,b')=-\tfrac{1}{2}\tr_{V}\bigl(\rho(b)\rho(b')\bigr)
=\sum_{a=1}^{m}\lambda_{a}(b)\lambda_{a}(b').
\end{equation}
When the weights $\{\lambda_{\ell}\}$ span $\fb^{*}$, the form $\gamma_{\mathcal{D}}$ is positive definite and gives an intrinsic metric on $\fb$; all quantitative constants below then admit datum-intrinsic formulations. The invariant form $B$ itself is null on $\fb\times\fb$ and therefore cannot serve as a positive norm on $\fb$.

\subsection{Equivariance and invariance of the loci}\label{sec:equivariance}

Let $\Aut_{\mathrm{spl}}^{+}(\mathcal{D})$ denote the \emph{split
automorphism group} of the datum: its elements are triples
$a=(\varphi,\Psi,\pi)$ with $\varphi\in\mathrm{GL}(\fb)$,
$\pi\in S_{m}$, and $\Psi_{\ell}\colon P_{\ell}\to P_{\pi(\ell)}$
an orientation-preserving orthogonal isomorphism satisfying
$\Psi_{\ell}J_{\ell}=J_{\pi(\ell)}\Psi_{\ell}$ and
$\lambda_{\ell}=\lambda_{\pi(\ell)}\circ\varphi$. The action on markings
is $(a\cdot H)_{\pi(\ell)}=\varphi(h_{\ell})$.

\begin{proposition}[Equivariance]\label{prop:equivariance}
For every $a=(\varphi,\Psi,\pi)\in\Aut^{+}_{\mathrm{spl}}(\mathcal{D})$, the map
\begin{equation}\label{eq:atilde}
\tilde a(b+u+\xi)=\varphi(b)+\Psi(u)+(\varphi^{-1})^{*}\xi
\end{equation}
is an isometric Lie algebra automorphism of $(\g,B)$. Setting $H'=a\cdot H$ one has $\Lambda^{2}\tilde a(S_{H})=S_{H'}$, $\Lambda^{3}\tilde a(E_{\ell}(H))=E_{\pi(\ell)}(H')$, and
\begin{equation}\label{eq:equivariance-eq}
\Sigma_{H'}\circ\Lambda^{2}\tilde a=\Lambda^{3}\tilde a\circ\Sigma_{H},
\end{equation}
componentwise in the radial and angular parts. Moreover, the loci $\Rone$, $\Rzero$, $\Rplus\subset\fb\wedge V$ are invariant under $\Lambda^{2}\tilde a$ for every $a\in\Aut_{\mathrm{spl}}^{+}(\mathcal D)$.
\end{proposition}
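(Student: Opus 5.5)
We verify each claim directly from the definitions, in four steps.

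\textbf{Step 1: $\tilde a$ is an isometric automorphism.} Because $\pi$ is a bijection and each $\Psi_\ell$ is an orthogonal isomorphism $P_\ell\to P_{\pi(\ell)}$, the map $\Psi=\bigoplus_\ell\Psi_\ell$ is an orthogonal automorphism of $V$. The intertwining condition $\Psi_\ell J_\ell=J_{\pi(\ell)}\Psi_\ell$ and the weight condition $\lambda_\ell=\lambda_{\pi(\ell)}\circ\varphi$ combine to give
\[
\Psi\rho(b)\Psi^{-1}\big|_{P_{\pi(\ell)}}=\lambda_\ell(b)J_{\pi(\ell)}=\lambda_{\pi(\ell)}(\varphi(b))J_{\pi(\ell)},
\]
that is, $\Psi\rho(b)\Psi^{-1}=\rho(\varphi(b))$. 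This settles the brackets of type $[b,u]$. For $[u,v]$ we evaluate on $b'=\varphi(b)$:
\[
\beta(\Psi u,\Psi v)(\varphi(b))=\langle\rho(\varphi b)\Psi u,\Psi v\rangle=\langle\Psi\rho(b)u,\Psi v\rangle=\beta(u,v)(b).
\]
This is exactly $\beta(\Psi u,\Psi v)=(\varphi^{-1})^*\beta(u,v)$. The brackets $[b,b']$ and those involving $\fz$ vanish on both sides. For isometry, $\Psi$ preserves $\langle\cdot,\cdot\rangle_V$, and $((\varphi^{-1})^*\xi)(\varphi b')=\xi(b')$, so $B$ is preserved term by term in \eqref{eq:Bform}.

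\textbf{Step 2: naturality of the Schouten bracket.} Any Lie algebra automorphism extends to the exterior algebra, and $\Lambda^\bullet\tilde a$ intertwines the Schouten--Nijenhuis bracket. The reason is that both $[\Lambda\tilde a\,\cdot,\Lambda\tilde a\,\cdot]$ and $\Lambda\tilde a[\cdot,\cdot]$ are biderivations extending the same bracket on $\g$, and the biderivation extension is unique. This yields
\[
[\Lambda^2\tilde a\,r,\Lambda^2\tilde a\,r]=\Lambda^3\tilde a[r,r].
\]

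\textbf{Step 3: slices and radial generators.} We have $\Lambda^2\tilde a(h_\ell\wedge u_\ell)=\varphi(h_\ell)\wedge\Psi_\ell u_\ell$. The vector $\varphi(h_\ell)$ is the $\pi(\ell)$-th marker of $H'$, and $\Psi_\ell$ maps $P_\ell$ onto $P_{\pi(\ell)}$. Hence $\Lambda^2\tilde a(S_H)=S_{H'}$. Positivity is preserved since $\lambda_{\pi(\ell)}(\varphi h_\ell)=\lambda_\ell(h_\ell)>0$.

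Because $\Psi_\ell$ is orientation-preserving and orthogonal, $\Lambda^2\Psi_\ell(\Omega_\ell)=\Omega_{\pi(\ell)}$. Combining the last two facts,
\[
\Lambda^3\tilde a(E_\ell(H))=2\lambda_{\pi(\ell)}(h'_{\pi(\ell)})\,h'_{\pi(\ell)}\wedge\Omega_{\pi(\ell)}=E_{\pi(\ell)}(H').
\]
Thus $\Lambda^3\tilde a$ maps $D_H$ onto $D_{H'}$, and Step 2 gives \eqref{eq:equivariance-eq}.

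\textbf{Step 4: componentwise statement and invariance of the loci.} The main point needing care is that the equivariance holds separately in the radial and angular parts. Our plan is to prove it from the explicit formula $\Phi_H(r)=\sum_\ell\|u_\ell\|^2E_\ell(H)$. Since $\|\Psi_\ell u_\ell\|=\|u_\ell\|$, Step 3 gives $\Phi_{H'}(\Lambda^2\tilde a\,r)=\Lambda^3\tilde a\,\Phi_H(r)$. The angular part is the difference $[r,r]-\Phi_H(r)$, so its equivariance follows by subtraction.

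This route avoids any appeal to orthogonality of the decomposition, which would fail for a non-isometric $\varphi$. It does require that the angular component be defined as $[r,r]-\Phi_H(r)$, or equivalently as the complement of the planewise self-terms. If the angular part is instead defined as a projection onto $X_H$, we also observe that $\Lambda^3\tilde a$ maps each $\fb\wedge P_i\wedge P_j$ onto $\fb\wedge P_{\pi(i)}\wedge P_{\pi(j)}$.

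For the loci, $\Lambda^2\tilde a$ sends the block $\fb\otimes P_\ell$ to $\fb\otimes P_{\pi(\ell)}$ via $\varphi\otimes\Psi_\ell$. This map preserves tensor rank, so $\Rone$ is invariant. It sends the marker line $\mathbb Rh_\ell$ to $\mathbb R\varphi(h_\ell)$. Since $\lambda_{\pi(\ell)}\circ\varphi=\lambda_\ell$, the marker $h_\ell$ lies in $\ker\lambda_\ell$ exactly when $\varphi(h_\ell)$ lies in $\ker\lambda_{\pi(\ell)}$, so $\Rzero$ and $\Rplus$ are invariant as well.
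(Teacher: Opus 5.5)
Your proof is correct and follows the same route as the paper. The automorphism and isometry claims come from the intertwining relation $\Psi\rho(b)=\rho(\varphi b)\Psi$, the identities for $S_H$ and $E_\ell(H)$ from orientation preservation, and \eqref{eq:equivariance-eq} from naturality of the Schouten bracket. Invariance of the loci comes from $\varphi\otimes\Psi_\ell$ preserving tensor rank together with $\lambda_{\pi(\ell)}\circ\varphi=\lambda_\ell$. Your Step 4 spells out the radial/angular componentwise equivariance, which the paper leaves implicit. You are right that it rests on the exterior-support direct sum rather than on orthogonality, since $\Lambda^{3}\tilde a$ need not be an isometry of the auxiliary metric.
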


\begin{proof}
Weight compatibility gives $\Psi\rho(b)=\rho(\varphi b)\Psi$. For $u,v\in V$ and $b'\in\fb$,

\[
((\varphi^{-1})^{*}\beta(u,v))(b')=\beta(u,v)(\varphi^{-1}b')=\langle\rho(\varphi^{-1}b')u,v\rangle=\langle\rho(b')\Psi u,\Psi v\rangle,
\]
so $\tilde a$ preserves the bracket; it preserves $B$ because $\Psi$ is orthogonal and $((\varphi^{-1})^{*}\xi)(\varphi b)=\xi(b)$. Preservation of orientation makes the identities on $S_H$ and $E_\ell(H)$ direct; \eqref{eq:equivariance-eq} is naturality of the Schouten bracket. Finally, $\Lambda^{2}\tilde a$ maps $\fb\wedge P_{\ell}$ onto $\fb\wedge P_{\pi(\ell)}$ preserving tensor rank, and preserves the marker-line condition because $\lambda_{\pi(\ell)}\circ\varphi=\lambda_{\ell}$; hence each of $\Rone$, $\Rzero$, $\Rplus$ is invariant.
\end{proof}

\begin{remark}[Datum-relative naturality]\label{rem:intrinsic}
The loci and slices of Definition~\ref{def:locus} are intrinsic relative to the rotational datum $\mathcal D$. When weights repeat, the full automorphism group of $(\g,B)$ may act on the isotypic component through orthogonal transformations that mix isomorphic planes without preserving the chosen refinement $V=\bigoplus P_{\ell}$; such transformations lie outside $\Aut_{\mathrm{spl}}^{+}(\mathcal D)$ and need not preserve block rank in the chosen splitting. The datum itself is part of the input, and every intrinsic statement below is understood relative to it.
\end{remark}

\section{Radial toric geometry and coercivity}\label{sec:structure}

The organising identity is planar: for $u\in P_{\ell}$, expanding $u=ae_{\ell}+bf_{\ell}$ gives $u\wedge J_{\ell}u=(a^{2}+b^{2})e_{\ell}\wedge f_{\ell}=\|u\|^{2}\Omega_{\ell}$. Each planewise self-interaction is then confined to a fixed ray, and summing over planes builds the simplicial cone $C_{H}$.

\subsection{Schouten formulae}

For $x,y,z,w\in\g$ the Schouten-Nijenhuis bracket satisfies
\begin{align}
[x\wedge y,\,z\wedge w]
&=[x,z]\wedge y\wedge w-[x,w]\wedge y\wedge z\notag\\
&\quad -[y,z]\wedge x\wedge w+[y,w]\wedge x\wedge z,\label{eq:schouten1}\\
[x\wedge y,\,x\wedge y]&=2[x,y]\wedge x\wedge y.\label{eq:schouten2}
\end{align}

\begin{lemma}[Planewise Schouten identities]\label{lem:planewise}
For $u_{\ell}\in P_{\ell}$,
\begin{equation}\label{eq:planewise1}
[h_{\ell}\wedge u_{\ell},\,h_{\ell}\wedge u_{\ell}]=\|u_{\ell}\|^{2}E_{\ell}(H),
\end{equation}
and, for $i\ne j$,
\begin{equation}\label{eq:planewise2}
[h_{i}\wedge u_{i},\,h_{j}\wedge u_{j}]
=\lambda_{j}(h_{i})\,h_{j}\wedge u_{i}\wedge J_{j}u_{j}
-\lambda_{i}(h_{j})\,h_{i}\wedge J_{i}u_{i}\wedge u_{j}.
\end{equation}
\end{lemma}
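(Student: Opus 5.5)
Both identities follow from the Schouten formulae \eqref{eq:schouten1}--\eqref{eq:schouten2} together with the bracket table \eqref{eq:bracket}: $[b,b']=0$, $[b,u]=\rho(b)u$, and on $P_\ell$ the restriction $\rho(b)|_{P_\ell}=\lambda_\ell(b)J_\ell$ from \eqref{eq:rotation}.

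For \eqref{eq:planewise1}, apply \eqref{eq:schouten2} with $x=h_\ell$ and $y=u_\ell$. This gives
\[
[h_\ell\wedge u_\ell,h_\ell\wedge u_\ell]=2[h_\ell,u_\ell]\wedge h_\ell\wedge u_\ell=2\lambda_\ell(h_\ell)\,J_\ell u_\ell\wedge h_\ell\wedge u_\ell .
\]
Moving $h_\ell$ to the front is a cyclic permutation of three vectors, hence an even one, so the expression equals $2\lambda_\ell(h_\ell)\,h_\ell\wedge u_\ell\wedge J_\ell u_\ell$. The planar identity $u_\ell\wedge J_\ell u_\ell=\|u_\ell\|^2\Omega_\ell$, checked above by expanding in $(e_\ell,f_\ell)$, then yields $\|u_\ell\|^2E_\ell(H)$ by the definition \eqref{eq:El}.

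For \eqref{eq:planewise2}, apply \eqref{eq:schouten1} with $x=h_i$, $y=u_i$, $z=h_j$, $w=u_j$, and handle the four terms in turn.
\begin{itemize}
\item $[h_i,h_j]=0$ because $\fb$ is abelian.
\item $[u_i,u_j]=\beta(u_i,u_j)$, and $\beta(u_i,u_j)(b)=\langle\lambda_i(b)J_iu_i,u_j\rangle=0$ because $P_i\perp P_j$ and $J_iu_i\in P_i$. This is the one place where orthogonality of the planes is genuinely used; no central term survives.
\item The term $-[h_i,u_j]\wedge u_i\wedge h_j$ equals $-\lambda_j(h_i)J_ju_j\wedge u_i\wedge h_j$. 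Reordering to $h_j\wedge u_i\wedge J_ju_j$ is the reversal of three factors, a single transposition, so the sign flips and the term becomes $+\lambda_j(h_i)\,h_j\wedge u_i\wedge J_ju_j$.
\item The term $-[u_i,h_j]\wedge h_i\wedge u_j=+[h_j,u_i]\wedge h_i\wedge u_j=\lambda_i(h_j)J_iu_i\wedge h_i\wedge u_j$. Swapping the first two factors gives $-\lambda_i(h_j)\,h_i\wedge J_iu_i\wedge u_j$.
\end{itemize}
Summing the surviving terms gives exactly \eqref{eq:planewise2}.

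The only real obstacle is sign bookkeeping, both the orientation of $J$ and the ordering conventions in \eqref{eq:schouten1}. I would cross-check it on a basis example, for instance $u_i=e_i$ and $u_j=e_j$, comparing against the biderivation definition of the Schouten bracket.
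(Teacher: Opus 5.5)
Your proof is correct and follows the paper's own route: \eqref{eq:schouten2} plus the planar identity gives \eqref{eq:planewise1}, and \eqref{eq:schouten1} applied to $(h_i,u_i,h_j,u_j)$ gives \eqref{eq:planewise2}; your sign bookkeeping checks out term by term. Your observation that $[u_i,u_j]=\beta(u_i,u_j)=0$ exactly, because $\rho(b)$ preserves $P_i$ and $P_i\perp P_j$, is slightly sharper than the paper's remark that this term lies in $\fz$ and is absent from $\Lambda^{3}(\fb\oplus V)$, since it makes \eqref{eq:planewise2} hold exactly in $\Lambda^{3}\g$ rather than only after projection.
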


\begin{proof}
By \eqref{eq:schouten2} and the planar identity,

\[
\begin{aligned}
[h_{\ell}\wedge u_{\ell},h_{\ell}\wedge u_{\ell}]
&=2\lambda_{\ell}(h_{\ell})\,J_{\ell}u_{\ell}\wedge h_{\ell}\wedge u_{\ell}\\
&=2\lambda_{\ell}(h_{\ell})\,h_{\ell}\wedge (u_{\ell}\wedge J_{\ell}u_{\ell})
=\|u_{\ell}\|^{2}E_{\ell}(H),
\end{aligned}
\]
which is \eqref{eq:planewise1}. Applying \eqref{eq:schouten1} to $(h_{i},u_{i},h_{j},u_{j})$ with $[h_{i},h_{j}]=0$ and $[u_{i},u_{j}]\in\fz$ absent from $\Lambda^{3}(\fb\oplus V)$ yields \eqref{eq:planewise2}.
\end{proof}

\subsection{The structure theorem}

\begin{theorem}[Radial toric structure, coercivity, and properness]\label{thm:structure}
Let $H\in\mathcal P$ and $r=\sum_{\ell}h_{\ell}\wedge u_{\ell}\in S_{H}$.
\begin{enumerate}
\item[\rm (a)] Under $\Lambda^{3}\g\supset D_{H}\oplus X_{H}$,
\begin{equation}\label{eq:decomp}
[r,r]=\Phi_{H}(r)+\mu_{H}(r),
\end{equation}
with
\begin{align}
\Phi_{H}(r)&=\sum_{\ell=1}^{m}\|u_{\ell}\|^{2}E_{\ell}(H),\label{eq:PhiH}\\
\mu_{H}(r)&=2\sum_{1\le i<j\le m}\bigl(\lambda_{j}(h_{i})\,h_{j}\wedge u_{i}\wedge J_{j}u_{j}
-\lambda_{i}(h_{j})\,h_{i}\wedge J_{i}u_{i}\wedge u_{j}\bigr).\label{eq:muH}
\end{align}
Moreover, $\Phi_{H}(S_{H})=C_{H}$, and for $Y=\sum_{\ell}t_{\ell}E_{\ell}(H)\in C_{H}$ the radial fibre is
\begin{equation}\label{eq:TY}
T_{Y}\coloneqq\Phi_{H}^{-1}(Y)=\prod_{t_{\ell}>0}S^{1}_{\sqrt{t_{\ell}}}\times\prod_{t_{\ell}=0}\{0\}.
\end{equation}
Under the identification $S_{H}\cong\bigoplus_{\ell}P_{\ell}$ with $\mathbb T^{m}$-action by independent rotations and symplectic form $\omega(v,w)=\sum_{\ell}\langle v_{\ell},J_{\ell}w_{\ell}\rangle$,
\begin{equation}\label{eq:moment-factor}
\Phi_{H}=\iota_{H}\circ\mu_{\mathbb T},\quad
\mu_{\mathbb T}(u)=\tfrac{1}{2}\sum_{\ell}\|u_{\ell}\|^{2}\varepsilon_{\ell}^{*},\quad
\iota_{H}(\varepsilon_{\ell}^{*})=2E_{\ell}(H),
\end{equation}
with $\iota_{H}$ an injective linear map.

\item[\rm (b)] Set
\begin{equation}\label{eq:kappa0}
\kappa_{0}(H)=2\left(\sum_{\ell=1}^{m}\frac{\|h_{\ell}\|^{2}}{\lambda_{\ell}(h_{\ell})^{2}}\right)^{-1/2}.
\end{equation}
Then
\begin{equation}\label{eq:coercive}
\|\Phi_{H}(r)\|\ge \kappa_{0}(H)\,\|r\|^{2},
\end{equation}
and the constant is optimal: for $\|r\|^{2}=R^{2}>0$, equality holds iff
\begin{equation}\label{eq:equality-locus}
\|u_{\ell}\|^{2}=\frac{R^{2}\lambda_{\ell}(h_{\ell})^{-2}}{\sum_{j=1}^{m}\|h_{j}\|^{2}\lambda_{j}(h_{j})^{-2}}\qquad(1\le\ell\le m).
\end{equation}

\item[\rm (c)] The restricted Schouten map

\[
\Sigma_{H}\colon S_{H}\to D_{H}\oplus X_{H},\quad \Sigma_{H}(r)=[r,r],
\]
is proper. Its image over $Y\in C_{H}$ is $Y+F_{Y}(T_{Y})$, where $F_{Y}=\mu_{H}|_{T_{Y}}$, and $F_{Y}(T_{Y})$ is compact.
\end{enumerate}
\end{theorem}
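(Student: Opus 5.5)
The plan is to expand $[r,r]$ bilinearly and read off each piece from Lemma~\ref{lem:planewise}. Since the Schouten bracket is symmetric on $\Lambda^{2}\g$,
\[
[r,r]=\sum_{\ell}[h_{\ell}\wedge u_{\ell},h_{\ell}\wedge u_{\ell}]+2\sum_{i<j}[h_{i}\wedge u_{i},h_{j}\wedge u_{j}].
\]
By \eqref{eq:planewise1} the diagonal terms give $\Phi_{H}(r)$. By \eqref{eq:planewise2} the off-diagonal terms give $\mu_{H}(r)$. Every term of $\mu_{H}$ lies in $\fb\wedge P_{i}\wedge P_{j}$ with $i\ne j$, so $\mu_{H}(r)\in X_{H}$. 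Each $E_{\ell}(H)$ lies in $\fb\wedge\Lambda^{2}P_{\ell}$, which is orthogonal to $X_{H}$ for any metric making $\fb,V,\fz$ orthogonal and the $P_{\ell}$ mutually orthogonal. Hence the sum $D_{H}\oplus X_{H}$ is direct, and in fact orthogonal.

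For the cone and the fibres I would use the linear independence of the $E_{\ell}(H)$ noted after \eqref{eq:XH}. The coordinates $t_{\ell}=\|u_{\ell}\|^{2}$ range over all of $\mathbb R_{\ge0}^{m}$ independently, so $\Phi_{H}(S_{H})=C_{H}$. The fibre over $Y$ is cut out by the conditions $\|u_{\ell}\|=\sqrt{t_{\ell}}$ in each plane, which gives \eqref{eq:TY}. For \eqref{eq:moment-factor} I would check that $\tfrac12\|u_{\ell}\|^{2}$ is the Hamiltonian of the rotation $J_{\ell}$ for $\omega$. Then comparing coefficients gives $\iota_{H}(\varepsilon^{*}_{\ell})=2E_{\ell}(H)$, and $\iota_{H}$ is injective by the independence of the $E_{\ell}(H)$.

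For (b), set $a_{\ell}=\|u_{\ell}\|^{2}$. Because the $E_{\ell}(H)$ have pairwise orthogonal supports,
\[
\|\Phi_{H}(r)\|^{2}=\sum_{\ell}a_{\ell}^{2}\|E_{\ell}\|^{2}=4\sum_{\ell}a_{\ell}^{2}\lambda_{\ell}(h_{\ell})^{2}\|h_{\ell}\|^{2},
\]
using $\|h\wedge\Omega_{\ell}\|=\|h\|$ since $h\perp P_{\ell}$ and $\Omega_{\ell}$ is a unit bivector. On the other side,
\[
\|r\|^{2}=\sum_{\ell}\|h_{\ell}\|^{2}a_{\ell}=R^{2},
\]
since the blocks $h_{\ell}\wedge u_{\ell}$ are mutually orthogonal. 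The key step is Cauchy--Schwarz applied as
\[
R^{2}=\sum_{\ell}\bigl(\lambda_{\ell}\|h_{\ell}\|a_{\ell}\bigr)\cdot\bigl(\|h_{\ell}\|/\lambda_{\ell}\bigr)\le\Bigl(\sum_{\ell}\lambda_{\ell}^{2}\|h_{\ell}\|^{2}a_{\ell}^{2}\Bigr)^{1/2}\Bigl(\sum_{\ell}\|h_{\ell}\|^{2}\lambda_{\ell}^{-2}\Bigr)^{1/2},
\]
where $\lambda_{\ell}$ stands for $\lambda_{\ell}(h_{\ell})$. This yields \eqref{eq:coercive} with the constant $\kappa_{0}(H)$. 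Equality holds iff the two vectors are proportional, that is, $\lambda_{\ell}\|h_{\ell}\|a_{\ell}=c\|h_{\ell}\|/\lambda_{\ell}$, or $a_{\ell}=c\lambda_{\ell}^{-2}$; substituting into the norm constraint fixes $c$ and gives \eqref{eq:equality-locus}. This locus is nonempty, so the constant is optimal. The only delicate point is the norm convention on exterior powers (orthonormal wedges of unit length). I take that as the convention fixed in §\ref{sec:metrics} and would state it explicitly in the proof.

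For (c), properness follows from the orthogonal decomposition together with (b):
\[
\|\Sigma_{H}(r)\|\ge\|\Phi_{H}(r)\|\ge\kappa_{0}(H)\|r\|^{2}.
\]
The preimage of a bounded set is therefore bounded, and it is closed by continuity, so preimages of compact sets are compact. The image over $Y$ is obtained by projecting onto $D_{H}$: the $D_{H}$-component is $\Phi_{H}(r)$, so the points of the image with radial part $Y$ are exactly $Y+\mu_{H}(T_{Y})$. The set $F_{Y}(T_{Y})$ is compact as the continuous image of the compact torus $T_{Y}$. I expect no real obstacle beyond the norm bookkeeping in (b); the rest is direct substitution of Lemma~\ref{lem:planewise}.
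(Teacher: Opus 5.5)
Your proposal is correct and follows the paper's proof essentially step for step. It uses the symmetric bilinear expansion of $[r,r]$ together with Lemma~\ref{lem:planewise}, the orthogonality of the supports $\fb\wedge\Lambda^{2}P_{\ell}$ and $\fb\wedge P_{i}\wedge P_{j}$, Cauchy--Schwarz with the same weights $\|h_{\ell}\|/\lambda_{\ell}(h_{\ell})$ (you only move the factor $2$ to the other side), and properness from $\|\Sigma_{H}(r)\|\ge\|\Phi_{H}(r)\|\ge\kappa_{0}(H)\|r\|^{2}$. Your explicit Hamiltonian check for \eqref{eq:moment-factor} and your note on the Gram-determinant norm convention (giving $\|h\wedge\Omega_{\ell}\|=\|h\|$) fill in details the paper treats as immediate.
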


\begin{proof}
Set $r_{\ell}=h_{\ell}\wedge u_{\ell}$. Symmetry of the Schouten bracket gives $[r,r]=\sum_{\ell}[r_{\ell},r_{\ell}]+2\sum_{i<j}[r_{i},r_{j}]$; Lemma~\ref{lem:planewise} then yields \eqref{eq:PhiH}-\eqref{eq:muH}. The summands $\fb\wedge\Lambda^{2}P_{\ell}$ and $\fb\wedge P_{i}\wedge P_{j}$ ($i\ne j$) are pairwise disjoint in $\Lambda^{3}\g$, giving \eqref{eq:decomp}. Since the squared radii $\|u_{\ell}\|^{2}$ vary independently in $[0,\infty)$ and the $E_{\ell}(H)$ are linearly independent, $\Phi_{H}(S_{H})=C_{H}$ and the fibres are as in \eqref{eq:TY}. The identification \eqref{eq:moment-factor} of $\Phi_{H}$ with the standard moment map followed by a linear rescaling is immediate.

For (b), set $t_{\ell}=\|u_{\ell}\|^{2}$. Orthogonality of the summands gives

\[
\|\Phi_{H}(r)\|^{2}=4\sum_{\ell}\lambda_{\ell}(h_{\ell})^{2}\|h_{\ell}\|^{2}t_{\ell}^{2},\qquad
\|r\|^{2}=\sum_{\ell}\|h_{\ell}\|^{2}t_{\ell}.
\]
Apply Cauchy-Schwarz with $a_{\ell}=\|h_{\ell}\|/\lambda_{\ell}(h_{\ell})$ and $b_{\ell}=2\lambda_{\ell}(h_{\ell})\|h_{\ell}\|t_{\ell}$:

\[
2\|r\|^{2}=\sum_{\ell}a_{\ell}b_{\ell}\le\Bigl(\sum_{\ell}a_{\ell}^{2}\Bigr)^{1/2}\Bigl(\sum_{\ell}b_{\ell}^{2}\Bigr)^{1/2}
=\Bigl(\sum_{\ell}\frac{\|h_{\ell}\|^{2}}{\lambda_{\ell}(h_{\ell})^{2}}\Bigr)^{1/2}\|\Phi_{H}(r)\|,
\]
which is \eqref{eq:coercive}. Equality forces $t_{\ell}\propto\lambda_{\ell}(h_{\ell})^{-2}$; imposing $\|r\|^{2}=R^{2}$ gives \eqref{eq:equality-locus}. Every coordinate in \eqref{eq:equality-locus} is strictly positive, so the equality locus meets the interior of the radius simplex and $\kappa_{0}(H)$ is optimal.

For (c), since $D_{H}\perp X_{H}$, estimate \eqref{eq:coercive} gives $\|\Sigma_{H}(r)\|\ge\kappa_{0}(H)\|r\|^{2}$, whence $\Sigma_{H}$ is proper. The fibre formula follows by fixing the $D_{H}$-component of $[r,r]$, and $F_{Y}(T_{Y})$ is compact because $T_{Y}$ is compact and $\mu_{H}$ is continuous.
\end{proof}

\begin{remark}[Moment-map interpretation and metric invariance]\label{rem:moment}
The radial map \eqref{eq:moment-factor} is a standard toric moment map followed by an injective linear rescaling; thus $C_{H}$ is the moment cone. Its relative interior is the regular-value locus, and its proper faces are indexed by the vanishing plane components. The properness of $\Sigma_{H}$ and Corollary~\ref{cor:image} below are independent of the auxiliary Euclidean norm on $\fb$; only the numerical value of $\kappa_{0}(H)$ depends on it. Under a change of positive-definite metric $g\mapsto g'$ on $\fb$ with $g'=A^{*}g$ for $A\in\mathrm{GL}(\fb)$ symmetric-positive, the ratio $\kappa_{0}(H)/\kappa_{0}'(H)$ is a rational function of the marker components and depends only on $A$, not on $r$. When the weights $\{\lambda_{\ell}\}$ span $\fb^{*}$, the trace form $\gamma_{\mathcal{D}}$ from \eqref{eq:trace} is positive-definite and yields the intrinsic constant
\begin{equation}\label{eq:kappagamma}
\kappa_{\gamma}(H)=2\Bigl(\sum_{\ell=1}^{m}\frac{\sum_{a=1}^{m}\lambda_{a}(h_{\ell})^{2}}{\lambda_{\ell}(h_{\ell})^{2}}\Bigr)^{-1/2}.
\end{equation}
\end{remark}

\begin{example}[Equality in the coercive estimate]\label{ex:equality}
Let $\fb=\mathbb Rb_{1}\oplus\mathbb Rb_{2}$ with the standard Euclidean metric, $\lambda_{1}=b_{1}^{*}$, $\lambda_{2}=b_{2}^{*}$, and $H=(b_{1},b_{2})$. Then $\kappa_{0}(H)=\sqrt{2}$. On the sphere $\|r\|^{2}=R^{2}$, equality in Theorem~\ref{thm:structure}(b) occurs precisely at

\[
\|u_{1}\|^{2}=\|u_{2}\|^{2}=R^{2}/2.
\]
Since $\|E_{1}(H)\|=\|E_{2}(H)\|=2$, these radii yield $\|\Phi_{H}(r)\|=\sqrt{2}R^{2}=\kappa_{0}(H)R^{2}$: the optimal point is the barycentre of the radius simplex.
\end{example}

\begin{corollary}[Image characterisation and affine fibres]\label{cor:image}
The image $\Sigma_{H}(S_{H})$ is closed and semialgebraic. For $\Theta\in\Lambda^{3}\g$, the equation $[r,r]=\Theta$ with $r\in S_{H}$ has a solution iff
\begin{enumerate}
\item[\rm (i)] $\Theta\in D_{H}\oplus X_{H}$;
\item[\rm (ii)] $Y\coloneqq\pr_{D_{H}}\Theta\in C_{H}$;
\item[\rm (iii)] $\pr_{X_{H}}\Theta\in F_{Y}(T_{Y})$.
\end{enumerate}
The solution set, when non-empty, is a closed semialgebraic subset of the compact torus $T_{Y}$.
\end{corollary}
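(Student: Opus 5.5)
The corollary follows from Theorem~\ref{thm:structure}. I would first prove the three-condition criterion, then deduce closedness and semialgebraicity of the image, and finally describe the solution set.

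\textbf{Necessity of (i)--(iii).} Suppose $[r,r]=\Theta$ for some $r\in S_{H}$. By \eqref{eq:decomp}, $\Theta=\Phi_{H}(r)+\mu_{H}(r)$ with $\Phi_{H}(r)\in D_{H}$ and $\mu_{H}(r)\in X_{H}$, which gives (i). The subspaces $D_{H}$ and $X_{H}$ are disjoint exterior supports, so the projections $\pr_{D_{H}}$ and $\pr_{X_{H}}$ onto the summands of $D_{H}\oplus X_{H}$ are well defined, and they are orthogonal for the auxiliary metric. Hence $Y=\Phi_{H}(r)\in\Phi_{H}(S_{H})=C_{H}$, which is (ii). It also gives $r\in T_{Y}$ and $\pr_{X_{H}}\Theta=\mu_{H}(r)=F_{Y}(r)$, which is (iii).

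\textbf{Sufficiency.} Assume (i)--(iii). By (iii) there is $r\in T_{Y}$ with $F_{Y}(r)=\pr_{X_{H}}\Theta$. Since $\Phi_{H}(r)=Y$, the decomposition gives $[r,r]=Y+\pr_{X_{H}}\Theta=\Theta$. The same computation identifies the solution set with $\Sigma_{H}^{-1}(\Theta)=\{r\in T_{Y}:\mu_{H}(r)=\pr_{X_{H}}\Theta\}$.

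\textbf{The solution set.} Each factor of $T_{Y}$ in \eqref{eq:TY} is either a circle $\|u_{\ell}\|^{2}=t_{\ell}$ or a point, so $T_{Y}$ is a compact real algebraic set. The map $\mu_{H}$ is a quadratic polynomial, so the solution set is cut out of $T_{Y}$ by polynomial equations. It is therefore closed and semialgebraic, and compact as a closed subset of $T_{Y}$.

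\textbf{The image.} $\Sigma_{H}$ is a polynomial map, so its image is semialgebraic by Tarski--Seidenberg. For closedness I would use properness from Theorem~\ref{thm:structure}(c) together with the fact that a proper continuous map into a locally compact Hausdorff space is closed. Alternatively, argue directly: if $\Sigma_{H}(r_{n})\to\Theta$, the coercive bound $\|\Sigma_{H}(r_{n})\|\ge\kappa_{0}(H)\|r_{n}\|^{2}$ keeps $(r_{n})$ bounded. A subsequence then converges to some $r$, and continuity gives $\Sigma_{H}(r)=\Theta$.

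The only point needing care is that "projection" is meant relative to the direct sum $D_{H}\oplus X_{H}$, and that this sum is direct and orthogonal. This was already established in the proof of Theorem~\ref{thm:structure}, so no step should present a real obstacle.
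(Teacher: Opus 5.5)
Your proposal is correct and follows the paper's argument. Conditions (i)--(iii) are read off from the orthogonal decomposition and fibre description in Theorem~\ref{thm:structure}(a,c), closedness of the image comes from properness, and semialgebraicity comes from Tarski--Seidenberg. Your treatment of the solution set, as the zero set of the polynomial equations $\mu_{H}(r)=\pr_{X_{H}}\Theta$ on the real algebraic set $T_{Y}$, is slightly more precise than the paper's appeal to continuity, which by itself gives only closedness and not semialgebraicity.
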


\begin{proof}
A proper continuous map between Euclidean spaces is closed, and semialgebraicity of the image follows from Tarski-Seidenberg \cite{BochnakCosteRoy1998}. Conditions (i)--(iii) are Theorem~\ref{thm:structure}(a,c); the solution set is a closed semialgebraic subset of a compact torus by continuity.
\end{proof}

\section{Monomial angular fibres and rank stratification}\label{sec:monomial}

Fixing $Y\in C_{H}$ freezes the moduli $\|u_{\ell}\|$ and leaves purely angular data. Complexifying each plane and diagonalising the $J_{\ell}$ turns the angular map into a Laurent monomial map. Two objects control the picture: the \emph{support graph} $\Gamma_{H}$, recording which pairs of planes interact via \eqref{eq:planewise2}, and the \emph{signed exponent configuration} $A_{H,Y}$, recording which eigenspace components survive coefficient cancellation.

\subsection{Support graph, eigenspace decomposition, and cancellation}

For $i<j$, declare $\{i,j\}$ an edge of $\Gamma_{H}$ when
\begin{equation}\label{eq:edge}
\lambda_{j}(h_{i})\ne 0\quad\text{or}\quad\lambda_{i}(h_{j})\ne 0.
\end{equation}
Complexify $P_{\ell}$ as $P_{\ell,\mathbb C}=L_{\ell}^{+}\oplus L_{\ell}^{-}$, where $J_{\ell}|_{L_{\ell}^{\varepsilon}}=i\varepsilon$ for $\varepsilon\in\{\pm 1\}$; let $z_{\ell}^{\varepsilon}$ denote the coordinate on $L_{\ell}^{\varepsilon}$. By \eqref{eq:planewise2}, the component of $\mu_{H}$ in $\fb_{\mathbb C}\wedge L_{i}^{\varepsilon}\wedge L_{j}^{\delta}$ is
\begin{equation}\label{eq:coeff}
c_{ij}^{\varepsilon\delta}(H)\,z_{i}^{\varepsilon}z_{j}^{\delta},
\qquad c_{ij}^{\varepsilon\delta}(H)=2i\bigl(\delta\lambda_{j}(h_{i})\,h_{j}-\varepsilon\lambda_{i}(h_{j})\,h_{i}\bigr).
\end{equation}

\begin{lemma}[Cancellation criterion]\label{lem:cancellation}
Let $\{i,j\}$ be an edge of $\Gamma_{H}$ and set $\hh_{\ell}=h_{\ell}/\lambda_{\ell}(h_{\ell})$. Then
\begin{equation}\label{eq:cancel}
c_{ij}^{\varepsilon\delta}(H)=0\iff \hh_{j}=(\delta/\varepsilon)\hh_{i}.
\end{equation}
An interacting pair either has all four signed coefficients nonzero or loses exactly the opposite pair indexed by $(\varepsilon,\delta)$ and $(-\varepsilon,-\delta)$. If $\{i,j\}$ is not an edge, all four coefficients vanish.
\end{lemma}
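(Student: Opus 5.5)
The plan is to work directly from the explicit coefficient formula \eqref{eq:coeff},
\[
c_{ij}^{\varepsilon\delta}(H)=2i\bigl(\delta\lambda_{j}(h_{i})\,h_{j}-\varepsilon\lambda_{i}(h_{j})\,h_{i}\bigr)\in\fb_{\mathbb C},
\]
which we take as given from Lemma~\ref{lem:planewise}. Vanishing of $c_{ij}^{\varepsilon\delta}(H)$ is a linear-dependence statement about the two vectors $h_i,h_j\in\fb$. We split into two cases according to whether they are linearly independent or parallel. Throughout, $\lambda_\ell(h_\ell)>0$ because $H\in\mathcal P$, so in particular $h_\ell\neq 0$ and $\hh_\ell$ is well defined and nonzero.

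\emph{Non-edge case, and independent markers.} If $\{i,j\}$ is not an edge, then $\lambda_j(h_i)=\lambda_i(h_j)=0$ by \eqref{eq:edge}, so all four coefficients vanish; this is the last sentence of the lemma. Now let $\{i,j\}$ be an edge and suppose $h_i,h_j$ are linearly independent. Then $c_{ij}^{\varepsilon\delta}=0$ forces $\lambda_j(h_i)=\lambda_i(h_j)=0$, contradicting the edge condition. So no coefficient vanishes. In this case $\hh_j=\pm\hh_i$ is also impossible, so both sides of \eqref{eq:cancel} are false.

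\emph{Parallel markers.} Write $h_j=c\,h_i$ with $c\neq 0$; then $\{i,j\}$ is automatically an edge, since $\lambda_i(h_j)=c\lambda_i(h_i)\neq0$. Substitute $\lambda_j(h_i)=\lambda_j(h_j)/c$ and $\lambda_i(h_j)=c\,\lambda_i(h_i)$. The coefficient becomes
\[
2i\bigl(\delta\lambda_j(h_j)-\varepsilon c\,\lambda_i(h_i)\bigr)h_i,
\]
which vanishes iff $c=(\delta/\varepsilon)\,\lambda_j(h_j)/\lambda_i(h_i)$. Dividing $h_j=c h_i$ by $\lambda_j(h_j)$, this condition reads $\hh_j=(\delta/\varepsilon)\hh_i$. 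Conversely, $\hh_j=(\delta/\varepsilon)\hh_i$ puts us in the parallel case with exactly this $c$, so \eqref{eq:cancel} holds in both directions.

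\emph{Dichotomy.} The condition in \eqref{eq:cancel} depends on $(\varepsilon,\delta)$ only through the sign $\delta/\varepsilon=\varepsilon\delta$. Hence the coefficients indexed by $(\varepsilon,\delta)$ and $(-\varepsilon,-\delta)$ vanish together. The complementary pair would require $\hh_j=-(\delta/\varepsilon)\hh_i$, and both conditions together would give $\hh_i=0$, which is impossible. So an interacting pair loses either no coefficient or exactly one opposite pair. There is no real obstacle here; the only point needing care is ensuring that the edge hypothesis is what rules out vanishing in the independent case.
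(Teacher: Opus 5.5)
Your proof is correct, but it reaches the criterion by a different route from the paper. The paper does not split on whether $h_i,h_j$ are linearly dependent. It starts from the vanishing relation $\delta\lambda_j(h_i)h_j=\varepsilon\lambda_i(h_j)h_i$ and first notes that neither scalar can vanish alone, since otherwise $h_i=0$ or $h_j=0$. It then applies the functionals $\lambda_i$ and $\lambda_j$ to the relation. This yields $\lambda_j(h_i)=(\varepsilon/\delta)\lambda_i(h_i)$ and $\lambda_i(h_j)=(\delta/\varepsilon)\lambda_j(h_j)$, and substituting back gives $\hh_j=(\delta/\varepsilon)\hh_i$. The pairing of opposite signs is then read off from $c_{ij}^{-\varepsilon,-\delta}=-c_{ij}^{\varepsilon\delta}$.

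Your case split makes explicit that cancellation can only occur for parallel markers, and that parallel markers automatically form an edge. After that the computation reduces to one scalar condition on the proportionality constant, with no need for the ``neither scalar vanishes alone'' step.

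Your dichotomy argument is also slightly more complete than the paper's. The identity $c_{ij}^{-\varepsilon,-\delta}=-c_{ij}^{\varepsilon\delta}$ only shows that opposite pairs vanish together. Your observation that the two complementary conditions together would force $\hh_i=0$, which is impossible since $\lambda_i(\hh_i)=1$, is what justifies the word ``exactly'' in the statement. The paper leaves that point implicit.
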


\begin{proof}
If $\{i,j\}$ is not an edge, both $\lambda_{j}(h_{i})$ and $\lambda_{i}(h_{j})$ vanish, hence so do all coefficients. Assume $\{i,j\}$ is an edge and $c_{ij}^{\varepsilon\delta}(H)=0$, i.e.
\begin{equation}\label{eq:cancel-eq}
\delta\lambda_{j}(h_{i})\,h_{j}=\varepsilon\lambda_{i}(h_{j})\,h_{i}.
\end{equation}
Neither scalar vanishes alone: if $\lambda_{j}(h_{i})=0$ and $\lambda_{i}(h_{j})\ne 0$, then $h_{i}=0$, contradicting $\lambda_{i}(h_{i})>0$. Applying $\lambda_{i}$ to \eqref{eq:cancel-eq} gives $\delta\lambda_{j}(h_{i})\lambda_{i}(h_{j})=\varepsilon\lambda_{i}(h_{j})\lambda_{i}(h_{i})$, whence $\lambda_{j}(h_{i})=(\varepsilon/\delta)\lambda_{i}(h_{i})$; applying $\lambda_{j}$ gives $\lambda_{i}(h_{j})=(\delta/\varepsilon)\lambda_{j}(h_{j})$. Substituting back returns $\hh_{j}=(\delta/\varepsilon)\hh_{i}$. The converse is immediate, and $c_{ij}^{-\varepsilon,-\delta}(H)=-c_{ij}^{\varepsilon\delta}(H)$ makes cancellations occur in opposite pairs.
\end{proof}

Put $d=\dim\fb$ and $r_{ij}=\dim_{\mathbb R}\spanR\{\lambda_{i},\lambda_{j}\}$. For $\tau\in\{\pm 1\}$, set

\[
\mathcal C_{ij}^{\tau}=\{H\in\mathcal P:\hh_{j}=\tau\hh_{i}\}.
\]

\begin{proposition}[Cancellation strata]\label{prop:strata}
The set $\mathcal C_{ij}^{\tau}$ is nonempty iff $(1,\tau)$ lies in the image of $(\lambda_{i},\lambda_{j})\colon\fb\to\mathbb R^{2}$. If nonempty, it is a smooth semialgebraic submanifold of $\mathcal P$ of codimension $d+r_{ij}-2$, parametrised by
\begin{equation}\label{eq:strata-param}
\{p\in\fb:\lambda_{i}(p)=1,\ \lambda_{j}(p)=\tau\}\times\mathbb R_{>0}^{2}\times\prod_{k\ne i,j}\{h_{k}:\lambda_{k}(h_{k})>0\}.
\end{equation}
If $r_{ij}=2$ both signs $\tau$ occur; if $r_{ij}=1$ nonemptiness is equivalent to $\lambda_{j}=\tau\lambda_{i}$, and then only that sign occurs. Consequently, if $d\ge 2$ non-cancellation is open, dense, and of full measure in $\mathcal P$; if $d=1$ then $\Gamma_{H}$ is complete and non-cancellation holds precisely when $\lambda_{i}\ne\pm\lambda_{j}$ for all $i\ne j$.
\end{proposition}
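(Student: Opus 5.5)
The plan is to work entirely through the normalised markers $\hh_{\ell}=h_{\ell}/\lambda_{\ell}(h_{\ell})$. Since $\lambda_{\ell}(\hh_{\ell})=1$, the map $H\mapsto(\hh_{\ell},\lambda_{\ell}(h_{\ell}))_{\ell}$ is a diffeomorphism from $\mathcal P$ onto $\prod_{\ell}\bigl(\{p:\lambda_{\ell}(p)=1\}\times\mathbb R_{>0}\bigr)$. Its inverse is $(\hh_{\ell},s_{\ell})\mapsto s_{\ell}\hh_{\ell}$. Each factor $\{\lambda_{\ell}=1\}$ is an affine hyperplane of dimension $d-1$, because $\lambda_{\ell}\ne0$.

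\textbf{Parametrisation and nonemptiness.} In these coordinates, $\mathcal C_{ij}^{\tau}$ is defined by the single condition $\hh_{j}=\tau\hh_{i}$. The common vector $p=\hh_{i}$ must satisfy $\lambda_{i}(p)=1$ and $\lambda_{j}(p)=\lambda_{j}(\tau\hh_{j})=\tau$. Conversely, any such $p$, together with scales $s_{i},s_{j}>0$ and arbitrary $h_{k}$ ($k\ne i,j$) with $\lambda_{k}(h_{k})>0$, gives a point of $\mathcal C_{ij}^{\tau}$ via $h_{i}=s_{i}p$ and $h_{j}=s_{j}\tau p$. Note that $\lambda_{j}(h_{j})=s_{j}\tau^{2}=s_{j}>0$. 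This yields \eqref{eq:strata-param}, and shows nonemptiness is equivalent to $(1,\tau)\in\operatorname{im}(\lambda_{i},\lambda_{j})$.

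\textbf{Dimension count.} The fibre $\{\lambda_{i}=1,\lambda_{j}=\tau\}$ is an affine subspace of dimension $d-r_{ij}$ when nonempty. Hence the stratum has dimension
\[
(d-r_{ij})+2+(m-2)d.
\]
Subtracting this from $\dim\mathcal P=md$ gives codimension $d+r_{ij}-2$. Smoothness and semialgebraicity are immediate because the parametrisation is a linear-algebraic embedding: it is the restriction of the diffeomorphism above to an affine-linear submanifold of the coordinate space.

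\textbf{Sign cases.} If $r_{ij}=2$, then $(\lambda_{i},\lambda_{j})$ is surjective onto $\mathbb R^{2}$, so both signs of $\tau$ occur. If $r_{ij}=1$, then $\lambda_{j}=c\lambda_{i}$ for some $c\ne0$. The image of $(\lambda_{i},\lambda_{j})$ is then the line $\{(x,cx)\}$, which contains $(1,\tau)$ iff $c=\tau$.

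\textbf{Genericity.} For $d\ge2$ the codimension $d+r_{ij}-2$ is at least $d-1\ge1$. The bad set is therefore a finite union, over pairs $\{i,j\}$ and signs $\tau$, of closed (relatively, in $\mathcal P$) semialgebraic submanifolds of positive codimension. Closedness holds because the defining condition $\hh_{j}=\tau\hh_{i}$ is continuous on $\mathcal P$. Such a union is closed, nowhere dense, and of measure zero, so the non-cancellation locus is open, dense and of full measure.

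I expect the only delicate point to be the relation to Lemma~\ref{lem:cancellation}. That lemma speaks of cancellation only for edges of $\Gamma_{H}$, and non-edges have all coefficients vanishing. For $d=1$, however, every nonzero $\lambda$ has trivial kernel, so $\lambda_{j}(h_{i})\ne0$ for $h_{i}\ne0$, and $\Gamma_{H}$ is complete. In this case $r_{ij}=1$ always, the codimension is $0$, and by the $r_{ij}=1$ case $\mathcal C_{ij}^{\tau}$ is all of $\mathcal P$ exactly when $\lambda_{j}=\tau\lambda_{i}$. This gives the stated criterion $\lambda_{i}\ne\pm\lambda_{j}$.
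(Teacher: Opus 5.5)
Your proposal is correct and follows essentially the same route as the paper's proof. Both use the parametrisation $p=\hh_{i}$, $h_{i}=ap$, $h_{j}=b\tau p$ with the other $h_{k}$ free, the same dimension count $md-d-r_{ij}+2$, and the same reduction in the case $d=1$; you additionally spell out the nonemptiness, sign-case and genericity steps that the paper leaves implicit. The one point worth stating explicitly is that $H\in\mathcal C_{ij}^{\tau}$ forces $\lambda_{j}(h_{i})=\tau\lambda_{i}(h_{i})\ne 0$, so $\{i,j\}$ is automatically an edge and the cancellation locus is exactly $\bigcup_{i<j,\tau}\mathcal C_{ij}^{\tau}$, which is what justifies your claim that the bad set is closed.
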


\begin{proof}
Setting $p=\hh_{i}$ gives $\lambda_{i}(p)=1$ and $\lambda_{j}(p)=\tau$; the affine subspace of admissible $p$ has dimension $d-r_{ij}$. Adjoining the two positive scalars $a,b>0$ with $h_{i}=ap$, $h_{j}=b\tau p$, and the free coordinates $h_{k}$ for $k\ne i,j$, gives $\dim\mathcal C_{ij}^{\tau}=md-d-r_{ij}+2$, whence the codimension. For $d=1$ all nonzero weights are proportional, so $\Gamma_{H}$ is complete; writing $h_{i}=a_{i}b$ for a generator $b$, $\hh_{i}=b/\lambda_{i}(b)$ and \eqref{eq:cancel} reduces to $\lambda_{i}=\pm\lambda_{j}$.
\end{proof}

\subsection{Exponent lattices and the angular fibre theorem}

Fix an active support $I\subset\{1,\dots,m\}$ and a radial value $Y=\sum_{\ell\in I}t_{\ell}E_{\ell}(H)$ with $t_{\ell}>0$ for every $\ell\in I$. Write
\begin{equation}\label{eq:uell}
u_{\ell}=\sqrt{t_{\ell}}\bigl(\cos\theta_{\ell}\,e_{\ell}+\sin\theta_{\ell}\,f_{\ell}\bigr),\quad s_{\ell}=e^{i\theta_{\ell}};
\end{equation}
in the eigenvector basis $v_{\ell}^{\pm}=(e_{\ell}\mp if_{\ell})/\sqrt{2}$ the coordinates become $w_{\ell}^{\pm}=\sqrt{t_{\ell}/2}\,s_{\ell}^{\pm 1}$. Define the \emph{signed exponent configuration}
\begin{equation}\label{eq:AHY}
A_{H,Y}=\bigl\{\varepsilon e_{i}+\delta e_{j}\in\mathbb Z^{I}:i<j,\ i,j\in I,\ c_{ij}^{\varepsilon\delta}(H)\ne 0\bigr\},
\end{equation}
the \emph{exponent lattice} $L_{A}=\spanZ(A_{H,Y})\subset\mathbb Z^{I}$, and the \emph{character kernel}
\begin{equation}\label{eq:KA}
K_{A}=\bigl\{\theta\in\mathbb T^{I}:e^{i\langle\alpha,\theta\rangle}=1\ \forall\,\alpha\in A_{H,Y}\bigr\}.
\end{equation}
When $A_{H,Y}=\varnothing$ set $\rk L_{A}=0$, $K_{A}=\mathbb T^{I}$, and $\conv(A_{H,Y})=\varnothing$. For a Laurent polynomial map $F=(F_{\nu})$ with monomial coordinates, its \emph{joint Newton polytope} $\Newt(F)$ is the convex hull of the union of the exponent supports of its coordinate functions; in our setting this coincides with $\conv(A_{H,Y})$.

\begin{theorem}[Angular fibre theorem]\label{thm:monomial}
After complexification and projection to joint eigenspaces, the angular map $F_{Y}=\mu_{H}|_{T_{Y}}$ takes the Laurent monomial form
\begin{equation}\label{eq:Laurent}
(s_{\ell})_{\ell\in I}\longmapsto\bigl(a_{\alpha}(H,Y)\,s^{\alpha}\bigr)_{\alpha\in A_{H,Y}},
\end{equation}
each $a_{\alpha}(H,Y)\ne 0$ lying in a coefficient summand $\fb_{\mathbb C}\wedge L_{i}^{\varepsilon}\wedge L_{j}^{\delta}$ distinct from the others. Then:
\begin{enumerate}
\item[\rm (a)] At every point of $T_{Y}$, $\rk_{\mathbb R}dF_{Y}=\dim_{\mathbb Q}\spanQ A_{H,Y}=\rk L_{A}$.
\item[\rm (b)] The fibres of $F_{Y}$ are cosets of $K_{A}$, and $F_{Y}$ factors through a smooth embedding $\mathbb T^{I}/K_{A}\hookrightarrow X_{H}\otimes_{\mathbb R}\mathbb C$ whose image is a translate of a compact torus of dimension $\rk L_{A}$. The component group of $K_{A}$ is Pontryagin dual to $L_{A}^{\mathrm{sat}}/L_{A}$, where $L_{A}^{\mathrm{sat}}=(\spanQ A_{H,Y})\cap\mathbb Z^{I}$.
\item[\rm (c)] The joint exponent polytope of the coordinate collection is $\Newt(F_{Y})=\conv(A_{H,Y})$.
\end{enumerate}
\end{theorem}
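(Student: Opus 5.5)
The proof starts from the explicit angular map. By Theorem~\ref{thm:structure}(a), on $T_{Y}$ we have $F_{Y}(r)=\mu_{H}(r)$ with $\mu_{H}$ given by \eqref{eq:muH}. We complexify each $P_{\ell}$ and expand $u_{\ell}=w_{\ell}^{+}v_{\ell}^{+}+w_{\ell}^{-}v_{\ell}^{-}$. Here $w_{\ell}^{\pm}=\sqrt{t_{\ell}/2}\,s_{\ell}^{\pm1}$ and $J_{\ell}v_{\ell}^{\pm}=\pm i\,v_{\ell}^{\pm}$. Each cross term $[h_{i}\wedge u_{i},h_{j}\wedge u_{j}]$ from \eqref{eq:planewise2} then splits into the four summands $\fb_{\mathbb C}\wedge L_{i}^{\varepsilon}\wedge L_{j}^{\delta}$. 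By \eqref{eq:coeff}, the summand indexed by $(i,j,\varepsilon,\delta)$ equals
\[
c_{ij}^{\varepsilon\delta}(H)\,\tfrac12\sqrt{t_{i}t_{j}}\;s_{i}^{\varepsilon}s_{j}^{\delta}\;v_{i}^{\varepsilon}\wedge v_{j}^{\delta}.
\]
So, setting $a_{\alpha}(H,Y)=\tfrac12\sqrt{t_{i}t_{j}}\,c_{ij}^{\varepsilon\delta}(H)\wedge v_{i}^{\varepsilon}\wedge v_{j}^{\delta}$ for $\alpha=\varepsilon e_{i}+\delta e_{j}$, the map is $s\mapsto(a_{\alpha}s^{\alpha})$. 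The summands $\fb_{\mathbb C}\wedge L_{i}^{\varepsilon}\wedge L_{j}^{\delta}$ are pairwise distinct direct summands of $\Lambda^{3}\g_{\mathbb C}$, and the map $(i<j,\varepsilon,\delta)\mapsto\varepsilon e_{i}+\delta e_{j}$ is injective. Hence no two coordinates combine. We keep exactly those $\alpha$ with $c_{ij}^{\varepsilon\delta}\neq0$, which by Lemma~\ref{lem:cancellation} is the set $A_{H,Y}$. Since $t_{i}t_{j}>0$ on $I$, every retained $a_{\alpha}$ is nonzero. Planes outside $I$ contribute nothing because $u_{\ell}=0$ there.

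For (a), we write $s=e^{i\theta}$. The coordinate $a_{\alpha}e^{i\langle\alpha,\theta\rangle}$ has differential $i\,a_{\alpha}e^{i\langle\alpha,\theta\rangle}\langle\alpha,d\theta\rangle$. Because the $a_{\alpha}$ lie in independent summands and never vanish, $dF_{Y}(\xi)=0$ holds iff $\langle\alpha,\xi\rangle=0$ for all $\alpha\in A_{H,Y}$. We view $F_{Y}$ as a real map into the real vector space underlying the complexification, and each coordinate is a nonzero complex vector times a real scalar. The real kernel of $dF_{Y}$ is therefore $A_{H,Y}^{\perp}\subset\mathbb R^{I}$ at every point, so the rank is $\dim\spanR A=\dim_{\mathbb Q}\spanQ A=\rk L_{A}$. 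This is a constant-rank statement.

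For (b), $F_{Y}(\theta)=F_{Y}(\theta')$ holds iff $e^{i\langle\alpha,\theta-\theta'\rangle}=1$ for all $\alpha$, again because the coefficients are nonzero and lie in separate summands. This is exactly the condition $\theta-\theta'\in K_{A}$, so the fibres are $K_{A}$-cosets. The induced map $\mathbb T^{I}/K_{A}\to X_{H}\otimes\mathbb C$ is then an injective immersion of a compact manifold, hence an embedding. The torus $\mathbb T^{I}/K_{A}$ is dual to $L_{A}$, so its dimension is $\rk L_{A}$. Its image is the orbit of $(a_{\alpha})$ under the diagonal compact torus $\{(\chi_{\alpha})\}$, i.e. a translate of a compact torus. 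For the component group, $K_{A}$ is the annihilator of $L_{A}$ in $\mathbb T^{I}$, and its identity component is the annihilator of $L_{A}^{\mathrm{sat}}$. Hence $\pi_{0}(K_{A})=\mathrm{Ann}(L_{A})/\mathrm{Ann}(L_{A}^{\mathrm{sat}})$, and by Pontryagin duality for $\mathbb Z^{I}$ and $\mathbb T^{I}$ this is dual to $L_{A}^{\mathrm{sat}}/L_{A}$. Concretely, choose a Smith normal form for the matrix with rows $\alpha\in A$; the elementary divisors $d_{k}$ then give $K_{A}^{0}$ times $\prod\mathbb Z/d_{k}$. The empty case $A=\varnothing$ is trivial and is covered by the stated conventions.

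Part (c) is definitional: each coordinate is a single monomial $s^{\alpha}$ with nonzero coefficient, so the union of the supports is $A_{H,Y}$ and its convex hull is $\conv(A_{H,Y})$. The main point to check carefully is the first step. It requires the sign and eigenvalue bookkeeping in \eqref{eq:coeff}, in particular that the two terms of \eqref{eq:planewise2} land in the same summand with combined coefficient exactly $c_{ij}^{\varepsilon\delta}$, and that distinct $(i,j,\varepsilon,\delta)$ really index independent summands. That independence is what makes both the rank computation and the fibre description coordinatewise.
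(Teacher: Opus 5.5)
Your proposal is correct and follows essentially the same route as the paper's proof. You expand in eigencoordinates via \eqref{eq:coeff}, absorb the radii into $a_{\alpha}$, and use independence of the summands $\fb_{\mathbb C}\wedge L_{i}^{\varepsilon}\wedge L_{j}^{\delta}$ to get $\ker dF_{Y}=A_{H,Y}^{\perp}$ for (a), the coset description plus compactness for (b), and (c) directly from \eqref{eq:Laurent}. Your extra details fill in points the paper leaves implicit: you check that the induced map on $\mathbb T^{I}/K_{A}$ is an immersion, which a \emph{smooth} embedding needs beyond the paper's compact-to-Hausdorff argument, and you identify $K_{A}^{0}=\mathrm{Ann}(L_{A}^{\mathrm{sat}})$, which underlies the Pontryagin duality.
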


\begin{proof}
Formula \eqref{eq:Laurent} follows from \eqref{eq:coeff}--\eqref{eq:uell} with the radii absorbed into the coefficients; distinct signed exponents correspond to distinct summands $\fb_{\mathbb C}\wedge L_{i}^{\varepsilon}\wedge L_{j}^{\delta}$ of $\Lambda^{3}\g_{\mathbb C}$, so their coefficient contributions are linearly independent.

For (a), the $\alpha$-component of $dF_{Y}$ is $i\langle\alpha,v\rangle a_{\alpha}e^{i\langle\alpha,\theta\rangle}$; independence of summands gives $\ker dF_{Y}=A_{H,Y}^{\perp}$, whence the rank formula.

For (b), $F_{Y}(\theta)=F_{Y}(\theta')$ iff $s^{\alpha}=(s')^{\alpha}$ for every $\alpha\in A_{H,Y}$, iff $\theta-\theta'\in K_{A}$. The induced map on the quotient is injective; compactness of the domain and Hausdorffness of the target make it an embedding onto a translate of a compact torus of dimension $\rk L_{A}$. The component-group statement is standard Pontryagin duality for kernels of homomorphisms of compact tori.

Part (c) is immediate from \eqref{eq:Laurent}.
\end{proof}

\begin{proposition}[Rank on fixed-support strata]\label{prop:rank-strata}
For $I\subset\{1,\dots,m\}$, let $S_{H}(I)=\{\sum_{i\in I}h_{i}\wedge u_{i}:u_{i}\in P_{i}\setminus\{0\}\}$. For $r\in S_{H}(I)$, $Y=\Phi_{H}(r)$, and $L_{A}$ the exponent lattice of $A_{H,Y}$,
\begin{equation}\label{eq:rank-strata}
\rk\, d(\Sigma_{H}|_{S_{H}(I)})_{r}=|I|+\rk L_{A}.
\end{equation}
In particular, $\Sigma_{H}$ has constant rank on every fixed-support subset of $\Rplus$.
\end{proposition}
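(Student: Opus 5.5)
Since $\Sigma_{H}=\Phi_{H}+\mu_{H}$ takes values in $D_{H}\oplus X_{H}$, and the two summands are complementary, the differential splits as $d\Sigma_{H}=d\Phi_{H}\oplus d\mu_{H}$. We compute it in the polar coordinates of \eqref{eq:uell} on $S_{H}(I)$. These coordinates, $(\rho_{\ell},\theta_{\ell})_{\ell\in I}$ with $u_{\ell}=\rho_{\ell}(\cos\theta_{\ell}e_{\ell}+\sin\theta_{\ell}f_{\ell})$ and $\rho_{\ell}>0$, form a global diffeomorphism $S_{H}(I)\cong\mathbb R_{>0}^{I}\times\mathbb T^{I}$, so ranks may be computed there. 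The tangent space at $r$ decomposes as radial directions $\partial_{\rho_{\ell}}$ plus angular directions $\partial_{\theta_{\ell}}$.

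\emph{Step 1 (radial directions).} By \eqref{eq:PhiH}, $\Phi_{H}=\sum_{\ell\in I}\rho_{\ell}^{2}E_{\ell}(H)$ is independent of the angles. Hence $d\Phi_{H}$ kills every $\partial_{\theta_{\ell}}$ and sends $\partial_{\rho_{\ell}}$ to $2\rho_{\ell}E_{\ell}(H)$. Because $\rho_{\ell}>0$ and the $E_{\ell}(H)$ are linearly independent, $d\Phi_{H}$ has rank exactly $|I|$ on the radial subspace.

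\emph{Step 2 (block-triangular reduction).} In the basis $(\partial_{\rho},\partial_{\theta})$ of the source and the splitting $D_{H}\oplus X_{H}$ of the target, the Jacobian is block lower triangular:
\[
d\Sigma_{H}=\begin{pmatrix} d_{\rho}\Phi_{H} & 0\\ d_{\rho}\mu_{H} & d_{\theta}\mu_{H}\end{pmatrix}.
\]
The top-left block $d_{\rho}\Phi_{H}$ is injective of rank $|I|$. The total rank is therefore $|I|+\rk d_{\theta}\mu_{H}$. To see this, note that a vector $(a,b)$ lies in the kernel iff $a=0$ (from the first row) and $d_{\theta}\mu_{H}(b)=0$. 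The rank–nullity count then gives $2|I|-\dim\ker d_{\theta}\mu_{H}=|I|+\rk d_{\theta}\mu_{H}$. So the off-diagonal block $d_{\rho}\mu_{H}$ plays no role.

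\emph{Step 3 (angular rank).} The block $d_{\theta}\mu_{H}$ at $r$ is exactly the differential of $F_{Y}=\mu_{H}|_{T_{Y}}$ with $Y=\Phi_{H}(r)$, because $T_{Y}$ is the $\theta$-torus at the fixed radii. Theorem~\ref{thm:monomial}(a) gives $\rk_{\mathbb R}dF_{Y}=\rk L_{A}$. Complexifying does not change real rank, since the complex coordinates are linear in the real components of $X_{H}$. This yields \eqref{eq:rank-strata}.

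\emph{Step 4 (constant rank).} For constancy on a fixed-support subset, observe that $A_{H,Y}$ depends only on $H$ and $I$, not on the values $t_{\ell}>0$. Indeed, by \eqref{eq:AHY} and Lemma~\ref{lem:cancellation}, membership is decided by $c_{ij}^{\varepsilon\delta}(H)\ne0$ for $i,j\in I$. The radii only rescale the nonzero coefficients $a_{\alpha}$. Thus $\rk L_{A}$ is constant on $S_{H}(I)$. The fixed-support subsets of $\Rplus$ meeting the slice are exactly these sets $S_{H}(I)$, so the rank is constant there.

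The only delicate point is Step 3: the tangent space of $T_{Y}$ must coincide with the $\partial_{\theta}$ directions, and the rank statement of Theorem~\ref{thm:monomial}(a) must be used pointwise. Both hold because the chart is a product chart with $\rho_{\ell}>0$ on $S_{H}(I)$.
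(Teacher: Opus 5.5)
Your proof is correct and follows the paper's argument. The paper likewise notes that $d\Phi_{H}$ has rank $|I|$ with kernel $\bigoplus_{i\in I}\mathbb R J_{i}u_{i}=T_{r}T_{Y}$, that $d\Sigma_{H}=dF_{Y}$ on this kernel has rank $\rk L_{A}$ by Theorem~\ref{thm:monomial}(a), and then adds the two contributions. Your block-triangular Jacobian makes that additivity explicit, and your Step~4 supplies the observation, left implicit in the paper, that $A_{H,Y}$ depends only on $(H,I)$, which is what gives constant rank on each $S_{H}(I)$.
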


\begin{proof}
At $r$,

\[
T_{r}S_{H}(I)=\bigoplus_{i\in I}h_{i}\wedge P_{i}.
\]
For $v=(v_{i})_{i\in I}$, $d\Phi_{H}(v)=2\sum_{i\in I}\langle u_{i},v_{i}\rangle E_{i}(H)$: this map has rank $|I|$ with kernel $\bigoplus_{i\in I}\mathbb R J_{i}u_{i}=T_{r}T_{Y}$. On this kernel, $d\Sigma_{H}=d\mu_{H}=dF_{Y}$ has rank $\rk L_{A}$ by Theorem~\ref{thm:monomial}(a). Because the two differentials take values in the orthogonal summands $D_{H}$ and $X_{H}$, their rank contributions add.
\end{proof}

\begin{remark}[Boundary behaviour and rank semicontinuity]\label{rem:boundary}
The convention $A_{H,Y}=\varnothing\Rightarrow\rk L_{A}=0$ makes Proposition~\ref{prop:rank-strata} unconditional: if the active planes do not interact, the differential rank is the radial rank $|I|$. If $J\subset I$ arises by letting $t_{\ell}\to 0$ for $\ell\in I\setminus J$, then under the natural embedding $\mathbb Z^{J}\hookrightarrow\mathbb Z^{I}$, $A_{H,Y_{J}}\subset A_{H,Y_{I}}$, so

\[
|J|+\rk L_{A(J)}\le |I|+\rk L_{A(I)}.
\]
The differential rank can only drop on the boundary of the radial cone. The isotropy may acquire additional circle factors, and its finite component group may change; these discrete jumps are exactly recorded by the Smith normal form on each fixed-support subset.
\end{remark}

\subsection{Defining binomial equations and Smith normal form}

\begin{proposition}[Defining binomial ideal]\label{prop:binomial}
Let $A=A_{H,Y}$ with chosen nonzero coefficients $a_{\alpha}\in\mathbb C^{*}$, and set $\Lambda_{A}=\{c\in\mathbb Z^{A}:\sum_{\alpha}c_{\alpha}\alpha=0\}$. In scalar eigencoordinates, the Zariski closure in $(\mathbb C^{*})^{A}$ of the compact angular image $F_{Y}(T_{Y})$ is the translated subtorus
\begin{equation}\label{eq:XA}
X_{A}=\bigl\{y\in(\mathbb C^{*})^{A}:y^{c}=a^{c}\ \forall\,c\in\Lambda_{A}\bigr\},
\end{equation}
with $y^{c}=\prod_{\alpha}y_{\alpha}^{c_{\alpha}}$ and $a^{c}=\prod_{\alpha}a_{\alpha}^{c_{\alpha}}$. Equivalently, $X_{A}$ is cut out by the Laurent binomials

\[
\prod_{c_{\alpha}>0}y_{\alpha}^{c_{\alpha}}-\Bigl(\prod_{\alpha}a_{\alpha}^{c_{\alpha}}\Bigr)\prod_{c_{\alpha}<0}y_{\alpha}^{-c_{\alpha}},\quad c\in\Lambda_{A}.
\]
\end{proposition}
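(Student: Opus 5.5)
The plan is to identify the angular image with the orbit of a monomial map, and then to use the standard description of the Zariski closure of such an orbit. By Theorem~\ref{thm:monomial}, in scalar eigencoordinates $F_{Y}(T_{Y})$ is the image of the compact torus $\mathbb T^{I}\subset(\mathbb C^{*})^{I}$ under the monomial map
\[
\psi\colon(\mathbb C^{*})^{I}\to(\mathbb C^{*})^{A},\qquad
s\mapsto(a_{\alpha}s^{\alpha})_{\alpha\in A}.
\]
Here $\psi=\tau_{a}\circ\psi_{0}$, where $\psi_{0}(s)=(s^{\alpha})_{\alpha}$ is a homomorphism of algebraic tori and $\tau_{a}$ is multiplication by $a=(a_{\alpha})$. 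Translation is an automorphism of the variety $(\mathbb C^{*})^{A}$, so it suffices to treat $a=1$ and then translate back.

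The first step is to show that the Zariski closure of $\psi_{0}(\mathbb T^{I})$ equals $\psi_{0}((\mathbb C^{*})^{I})$. I would argue in two parts. First, $\mathbb T^{I}$ is Zariski dense in $(\mathbb C^{*})^{I}$: a Laurent polynomial vanishing on $\mathbb T^{I}$ has all Fourier coefficients zero, so it is the zero polynomial. Second, $\psi_{0}$ is continuous in the Zariski topology, so the closure of $\psi_{0}(\mathbb T^{I})$ contains $\psi_{0}((\mathbb C^{*})^{I})$. The image of a homomorphism of algebraic tori is a closed subtorus, by Chevalley together with the fact that images of algebraic group homomorphisms are closed. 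This gives the reverse inclusion.

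The second step identifies the image subtorus $T_{0}=\psi_{0}((\mathbb C^{*})^{I})$ with $\{y: y^{c}=1\ \forall c\in\Lambda_{A}\}$. The inclusion $\subseteq$ is immediate: for $c\in\Lambda_{A}$ we have $\psi_{0}(s)^{c}=s^{\sum_{\alpha}c_{\alpha}\alpha}=1$.

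For the inclusion $\supseteq$ I use the character-group dictionary. The closed subgroup $T_{0}$ has character group equal to the image of the restriction map $\mathbb Z^{A}\to\mathbb Z^{I}$, $c\mapsto\sum_{\alpha}c_{\alpha}\alpha$, and its annihilator in $\mathbb Z^{A}$ is precisely $\Lambda_{A}$. A diagonalizable subgroup is the common zero set of $y^{c}-1$ over its annihilator lattice $c$. This is the main point requiring care, because the image lattice need not be saturated. Still, the statement holds for arbitrary sublattices, since a closed subgroup of a torus is determined by its annihilator: $\mathrm{Ann}(\mathrm{Ann}(T_{0}))=T_{0}$ by duality between diagonalizable groups and finitely generated abelian groups.

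To make this concrete I would invoke Smith normal form of the integer matrix $M_{A}$ with columns $\alpha\in A$. This choice of bases puts the map $\mathbb Z^{A}\to\mathbb Z^{I}$ in diagonal form $\mathrm{diag}(d_{1},\dots,d_{k},0,\dots)$. In the adapted coordinates, $\psi_{0}$ sends $s$ to $(s_{1}^{d_{1}},\dots,s_{k}^{d_{k}},1,\dots,1)$. Every $d_{i}$-th root exists in $\mathbb C^{*}$, so the image is exactly $\{y_{k+1}=\dots=1\}$, i.e.\ the zero set of the binomials indexed by $\ker_{\mathbb Z}M_{A}=\Lambda_{A}$. This surjectivity onto the subtorus is what fails over the compact torus and is exactly why complexification is needed.

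Finally, translating by $a$ gives $X_{A}=a\cdot T_{0}=\{y: y^{c}=a^{c}\}$. Clearing negative exponents in $y^{c}=a^{c}$ yields the displayed Laurent binomials, which are equivalent equations on $(\mathbb C^{*})^{A}$ because the $y_{\alpha}$ are units there. It also suffices to impose these equations for a $\mathbb Z$-basis of $\Lambda_{A}$, again read off from the Smith normal form.
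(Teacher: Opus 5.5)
Your proposal is correct and follows the same route as the paper. Both arguments use Zariski density of $\mathbb T^{I}$ in $(\mathbb C^{*})^{I}$, closedness of the image of the torus homomorphism $e_{\alpha}^{*}\mapsto\alpha$, identification of the characters trivial on that image with $\Lambda_{A}=\ker_{\mathbb Z}M_{A}$, and translation by $(a_{\alpha})$. The only difference is that you check the reverse inclusion explicitly via Smith normal form (the content of Remark~\ref{rem:smith}), whereas the paper cites the standard duality between tori and character lattices.
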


\begin{proof}
The real torus $\mathbb T^{I}$ is Zariski-dense in $(\mathbb C^{*})^{I}$, so the closure of $F_{Y}(T_{Y})$ equals the image of the complexified monomial map. This map is the translate by $(a_{\alpha})$ of the algebraic-torus homomorphism induced by $e_{\alpha}^{*}\mapsto\alpha$. The image of a homomorphism of algebraic tori is a closed subtorus, and a character $y^{c}$ is trivial on the untranslated image iff $\sum c_{\alpha}\alpha=0$ \cite{EisenbudSturmfels1996,CoxLittleSchenck2011}.
\end{proof}

\begin{remark}[Smith-normal-form algorithm]\label{rem:smith}
Let $M_{A}\in\mathrm{Mat}_{|I|\times|A|}(\mathbb Z)$ have the vectors in $A$ as columns. Choose unimodular matrices $U,V$ such that

\[
UM_{A}V=\diag(d_{1},\dots,d_{r},0,\dots,0),\qquad d_{1}\mid\cdots\mid d_{r}.
\]
Then $r=\rk L_{A}$, $L_{A}^{\mathrm{sat}}/L_{A}\cong\bigoplus_{j=1}^{r}\mathbb Z/d_{j}\mathbb Z$, $K_{A}\cong\mathbb T^{|I|-r}\times\prod_{j=1}^{r}\mu_{d_{j}}$ (the last isomorphism noncanonical), and the last $|A|-r$ columns of $V$ form a $\mathbb Z$-basis of $\ker_{\mathbb Z}M_{A}=\Lambda_{A}$. The corresponding $|A|-r$ binomials of Proposition~\ref{prop:binomial} generate the defining ideal; all other relations follow multiplicatively.
\end{remark}

\begin{corollary}[Graph formula without cancellation]\label{cor:graph}
Assume no cancellation on the active graph $\Gamma_{H}|_{I}$. Then $\rk dF_{Y}$ equals the number of $i\in I$ incident to an active edge; if $I_{1},\dots,I_{c}$ are the nontrivial connected components and $J\subset I$ the isolated vertices, then $K_{A}\cong\mathbb T^{|J|}\times(\mathbb Z/2\mathbb Z)^{c}$, the order-two element on component $I_{a}$ being the simultaneous half-turn $\theta_{i}=\pi$ for $i\in I_{a}$.
\end{corollary}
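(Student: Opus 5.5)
Without cancellation, Lemma~\ref{lem:cancellation} says every active edge $\{i,j\}$ of $\Gamma_{H}|_{I}$ contributes all four signed exponents $\pm e_{i}\pm e_{j}$ to $A_{H,Y}$. Non-edges contribute nothing. So $A_{H,Y}$ is the disjoint union, over the nontrivial components $I_{a}$, of the sets
\[
A_{a}=\{\varepsilon e_{i}+\delta e_{j}:\{i,j\}\text{ an edge in }I_{a}\}.
\]
These sets have disjoint coordinate supports. Hence $L_{A}=\bigoplus_{a}L_{A_{a}}$, each $L_{A_{a}}\subset\mathbb Z^{I_{a}}$, and $K_{A}$ splits as the product of $\mathbb T^{J}$ (no constraints on isolated vertices) with the groups $K_{A_{a}}\subset\mathbb T^{I_{a}}$. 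The whole statement therefore reduces to a single connected component $I_{a}$ with $n=|I_{a}|\ge2$. The rank assertion then follows from Theorem~\ref{thm:monomial}(a), since the number of vertices incident to an active edge is $\sum_{a}|I_{a}|$.

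For one connected component, I will compute $L_{A_{a}}$ exactly and show it equals the even sublattice
\[
L_{a}^{\mathrm{ev}}=\{x\in\mathbb Z^{I_{a}}:\textstyle\sum_{i}x_{i}\equiv 0\bmod 2\}.
\]
Inclusion $\subseteq$ is clear, since each generator $\pm e_{i}\pm e_{j}$ has even coordinate sum.

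For $\supseteq$, take an edge $\{i,j\}$. Then $(e_{i}+e_{j})+(e_{i}-e_{j})=2e_{i}$ and $e_{i}-e_{j}$ both lie in $L_{A_{a}}$. By connectivity, every difference $e_{i}-e_{k}$ with $i,k\in I_{a}$ is a sum of such edge differences along a path, and every vertex has some $2e_{i}$ in the lattice. These vectors, together with one $e_{i}+e_{j}$, generate $L_{a}^{\mathrm{ev}}$. Indeed, $\{e_{i}-e_{k}\}$ generates the sum-zero sublattice, and adding $2e_{i}$ produces all even-sum vectors.

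From this, $\rk L_{A_{a}}=n$, so $L_{A_{a}}^{\mathrm{sat}}=\mathbb Z^{I_{a}}$ and $L_{A_{a}}^{\mathrm{sat}}/L_{A_{a}}\cong\mathbb Z/2\mathbb Z$.

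It remains to identify $K_{A_{a}}$ directly. A point $\theta\in K_{A_{a}}$ satisfies $\theta_{i}\pm\theta_{j}\in2\pi\mathbb Z$ on every edge, so $2\theta_{i}\equiv0$ and $\theta_{i}\equiv\theta_{j}$ along edges. By connectivity, all $\theta_{i}$ for $i\in I_{a}$ are equal to a common value in $\{0,\pi\}$. This gives $K_{A_{a}}=\{0,\ (\pi,\dots,\pi)\}\cong\mathbb Z/2\mathbb Z$, generated by the simultaneous half-turn. The result is consistent with the Pontryagin duality in Theorem~\ref{thm:monomial}(b).

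Taking the product over components gives $K_{A}\cong\mathbb T^{|J|}\times(\mathbb Z/2\mathbb Z)^{c}$ and $\rk dF_{Y}=\sum_{a}|I_{a}|=|I|-|J|$.

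I expect the main subtlety to be the generation step, namely that $L_{A_{a}}$ has full rank $n$ rather than $n-1$. That is exactly where non-cancellation is used. Having both $e_{i}+e_{j}$ and $e_{i}-e_{j}$ on a single edge produces $2e_{i}$; under cancellation only one of the two survives, and the rank can drop, as in the resonant stratum.

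I will state the disjoint-support reduction and the edge computation carefully. The remaining path-connectivity bookkeeping is routine.
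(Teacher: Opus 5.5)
Your proposal is correct and follows essentially the same route as the paper: split $A_{H,Y}$ along the nontrivial components, identify each $L_{A_a}$ with the even-sum sublattice of $\mathbb Z^{I_a}$, and read off the two-element kernel $\{0,\pi(1,\dots,1)\}$ per component, with free circle factors on the isolated vertices. The only difference is that you spell out the generation step (using $2e_i$ and the differences $e_i-e_k$ along paths) and solve the character equations directly, whereas the paper simply asserts the integral span and takes its annihilator.
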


\begin{proof}
Each active edge $\{i,j\}$ contributes the four vectors $\pm e_{i}\pm e_{j}$ without cancellation. For a connected component $I_{a}$ these span $\mathbb R^{I_{a}}$ and their integral span is $\{x\in\mathbb Z^{I_{a}}:\sum_{i\in I_{a}}x_{i}\equiv 0\pmod 2\}$, whose annihilator in $\mathbb T^{I_{a}}$ is $\{0,\pi(1,\dots,1)\}$. Isolated vertices contribute free circle factors.
\end{proof}

\begin{example}[Three active planes: graph, Smith form, and binomials]\label{ex:threeplane}
Let $\fb=\mathbb Rb_{1}\oplus\mathbb Rb_{2}$ with dual basis $(b_{1}^{*},b_{2}^{*})$, and set

\[
\lambda_{1}=b_{1}^{*},\quad\lambda_{2}=b_{2}^{*},\quad\lambda_{3}=b_{1}^{*}+b_{2}^{*},\quad H=(b_{1},b_{2},b_{1}+b_{2}).
\]
All marking positivities hold. The pair $\{1,2\}$ is not an edge because $\lambda_{2}(h_{1})=\lambda_{1}(h_{2})=0$; the pairs $\{1,3\}$ and $\{2,3\}$ are edges, and no cancellation occurs because $\hh_{1}=b_{1}$, $\hh_{2}=b_{2}$, $\hh_{3}=(b_{1}+b_{2})/2$ are not proportional up to sign. Thus $\Gamma_{H}$ is the path $1$--$3$--$2$.

For a full-support radial value $Y=\sum_{\ell}t_{\ell}E_{\ell}(H)$ with $t_{\ell}>0$, order the exponents as

\[
\alpha_{1}=e_{1}+e_{3},\ \alpha_{2}=e_{1}-e_{3},\ \alpha_{3}=-e_{1}+e_{3},\ \alpha_{4}=-e_{1}-e_{3},
\]

\[
\alpha_{5}=e_{2}+e_{3},\ \alpha_{6}=e_{2}-e_{3},\ \alpha_{7}=-e_{2}+e_{3},\ \alpha_{8}=-e_{2}-e_{3}.
\]
The exponent matrix is
\begin{equation}\label{eq:MA}
M_{A}=\begin{pmatrix}
1 & 1 & -1 & -1 & 0 & 0 & 0 & 0 \\
0 & 0 & 0 & 0 & 1 & 1 & -1 & -1 \\
1 & -1 & 1 & -1 & 1 & -1 & 1 & -1
\end{pmatrix},
\end{equation}
whose Smith normal form is $\diag(1,1,2)\oplus 0_{3\times 5}$. Hence $\rk L_{A}=3$, $L_{A}=\{x\in\mathbb Z^{3}:x_{1}+x_{2}+x_{3}\equiv 0\pmod 2\}$, $L_{A}^{\mathrm{sat}}=\mathbb Z^{3}$, and $K_{A}\cong\mathbb Z/2\mathbb Z$, generated by the simultaneous half-turn $(\pi,\pi,\pi)$; Proposition~\ref{prop:rank-strata} then yields differential rank $3+3=6$ on the full-support subset.

A $\mathbb Z$-basis of $\ker_{\mathbb Z}M_{A}$ is

\[
\alpha_{1}+\alpha_{4}=\alpha_{2}+\alpha_{3}=\alpha_{5}+\alpha_{8}=\alpha_{6}+\alpha_{7}=0,\quad
\alpha_{1}+\alpha_{6}-\alpha_{2}-\alpha_{5}=0,
\]
so the translated subtorus in $(\mathbb C^{*})^{8}$ is cut out by

\[
\begin{aligned}
x_{1}x_{4}&=a_{1}a_{4}, & x_{2}x_{3}&=a_{2}a_{3}, & x_{5}x_{8}&=a_{5}a_{8},\\
x_{6}x_{7}&=a_{6}a_{7}, & x_{1}x_{6}&=\frac{a_{1}a_{6}}{a_{2}a_{5}}\,x_{2}x_{5}. &&
\end{aligned}
\]
The example exercises every output of the Smith-normal-form procedure: rank, a nontrivial component group, and a finite binomial basis.
\end{example}

\section{Rank-one support rigidity for triangular tensors}\label{sec:obstruction}

The radial support of a marked slice cannot be reached by any other exterior component of a general bivector. This support isolation, combined with the planewise identity of Lemma~\ref{lem:planewise}, yields a statement for arbitrary bivectors, subject only to the block-rank-one condition on the $\fb\wedge V$ component. No angular data are needed; the proof is a direct projection onto the radial exterior sectors.

\begin{theorem}[Rank-one support rigidity]\label{thm:obstruction}
Let $r\in\Lambda^{2}\g$ and let

\[
p=\pr_{\fb\wedge V}(r)=\sum_{\ell=1}^{m}p_{\ell}
\]
be its $\fb\wedge V$ component. Assume $p\in\Rone$. If $[r,r]=0$, then $p\in\Rzero$; equivalently, every nonzero factorisation $p_{\ell}=h_{\ell}\wedge u_{\ell}$ satisfies

\[
\lambda_{\ell}(h_{\ell})=0.
\]
Consequently:
\begin{enumerate}
\item[\rm (a)] if $p\in\Rplus$, then $p=0$;
\item[\rm (b)] the classical Yang-Baxter zero set in every positive marked slice $S_{H}$ is $\{0\}$;
\item[\rm (c)] arbitrary components in $\Lambda^{2}\fb$, $\Lambda^{2}V$, and $\fz\wedge\g$ cannot cancel a non-null rank-one block;
\item[\rm (d)] if $\dim\fb=1$, every triangular tensor has zero $\fb\wedge V$ component, at every frequency pattern.
\end{enumerate}
\end{theorem}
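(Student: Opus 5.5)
The plan is to project the identity $[r,r]=0$ onto the radial exterior sectors $\fb\wedge\Lambda^{2}P_{\ell}\subset\Lambda^{3}\g$, one plane at a time. I will show that the only contribution landing there is the planewise self-interaction $[p_{\ell},p_{\ell}]$. By Lemma~\ref{lem:planewise} this equals $2\lambda_{\ell}(h_{\ell})\|u_{\ell}\|^{2}\,h_{\ell}\wedge\Omega_{\ell}$, which is the computation \eqref{eq:planewise1} and does not use positivity of the marking. The argument needs no eigencoordinates.

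\emph{Decomposition and bookkeeping.} Write
\[
r=a+p+q+\zeta,\qquad a\in\Lambda^{2}\fb,\ p\in\fb\wedge V,\ q\in\Lambda^{2}V,\ \zeta\in\fz\wedge\g .
\]
Grade $\Lambda^{\bullet}\g$ by the number of factors taken from $\fb$, $V$ and $\fz$. The bracket \eqref{eq:bracket} maps $\fb\otimes V\to V$ and $V\otimes V\to\fz$, kills $\fb\otimes\fb$, and $\fz$ is central. Hence every Schouten term containing a factor from $\zeta$ keeps a $\fz$ factor and misses the target $\fb\wedge V\wedge V$ (one $\fb$, two $V$, no $\fz$). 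The same tracking disposes of the remaining pairs:
\begin{itemize}
\item $[a,a]=0$;
\item $[a,p]\subset\Lambda^{2}\fb\wedge V$;
\item $[p,q]\subset\Lambda^{3}V\oplus\fz\wedge\fb\wedge V$;
\item $[q,q]\subset\fz\wedge\Lambda^{2}V$;
\item $[p,p]$ has its $\fz$-free part in $\fb\wedge\Lambda^{2}V$.
\end{itemize}
So the $\fb\wedge\Lambda^{2}V$ component of $[r,r]$ is $[p,p]+2[a,q]$ projected there.

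\emph{Isolating the $\ell$-th radial sector.} Split $q=\sum_{\ell}c_{\ell}\Omega_{\ell}+q_{\mathrm{cross}}$, where $q_{\mathrm{cross}}\in\bigoplus_{i<j}P_{i}\wedge P_{j}$. Since each $\rho(b)$ preserves every plane, $[a,q_{\mathrm{cross}}]$ lands in $\fb\wedge P_{i}\wedge P_{j}$ with $i\ne j$. The diagonal part drops out: $[b,\Omega_{\ell}]=\lambda_{\ell}(b)(J_{\ell}e_{\ell}\wedge f_{\ell}+e_{\ell}\wedge J_{\ell}f_{\ell})=0$, so by the biderivation property $[b_{1}\wedge b_{2},\Omega_{\ell}]=0$ and $[a,c_{\ell}\Omega_{\ell}]=0$. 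For $[p,p]=\sum_{\ell}[p_{\ell},p_{\ell}]+2\sum_{i<j}[p_{i},p_{j}]$, formula \eqref{eq:planewise2} (valid for arbitrary markers) places the cross terms in $\fb\wedge P_{i}\wedge P_{j}$. Because the rank-one hypothesis lets us write $p_{\ell}=h_{\ell}\wedge u_{\ell}$, the computation \eqref{eq:planewise1} gives
\[
\pr_{\fb\wedge\Lambda^{2}P_{\ell}}[r,r]=2\lambda_{\ell}(h_{\ell})\|u_{\ell}\|^{2}\,h_{\ell}\wedge\Omega_{\ell}.
\]
If $p_{\ell}\ne0$, then $h_{\ell}\ne0$ and $u_{\ell}\ne0$, so $h_{\ell}\wedge\Omega_{\ell}\ne0$. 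Thus $[r,r]=0$ forces $\lambda_{\ell}(h_{\ell})=0$, i.e.\ $p\in\Rzero$.

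\emph{Consequences.} For (a), a nonzero block of $p\in\Rplus$ has $\lambda_{\ell}(h_{\ell})\ne0$, so every block vanishes. Item (b) is (a) applied to $r\in S_{H}\subset\Rplus$. Item (c) is precisely the support computation above: $a$, $q$ and $\zeta$ contribute nothing to $\fb\wedge\Lambda^{2}P_{\ell}$. For (d), when $\dim\fb=1$ every block $\fb\otimes P_{\ell}$ has rank at most one, and $\ker\lambda_{\ell}=0$ because $\lambda_{\ell}\ne0$. Hence $\Rzero=\{0\}$, and the main statement gives $p=0$ for every triangular $r$, with no frequency hypothesis.

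The main obstacle is the support tracking in the first two steps, specifically ruling out the mixed term $[a,q]$, which genuinely lands in $\fb\wedge\Lambda^{2}V$. The key point is the vanishing $[\fb,\Omega_{\ell}]=0$, i.e.\ rotations preserve area; I would verify it carefully from the biderivation formula \eqref{eq:schouten1}, including signs.
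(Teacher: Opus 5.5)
Your proposal is correct and follows the paper's proof essentially step for step. It uses the same decomposition of $r$ into $\Lambda^{2}\fb$, $\fb\wedge V$, $\Lambda^{2}V$ and $\fz\wedge\g$ parts, the same support-tracking list, and the same split of $q$ into diagonal $\Omega_{\ell}$ terms (killed by $[\fb,\Omega_{\ell}]=0$) and cross terms landing in $\fb\wedge P_{i}\wedge P_{j}$; it then concludes by the same projection onto $\fb\wedge\Lambda^{2}P_{\ell}$ via the planewise identity, with your explicit check of $[b,\Omega_{\ell}]=0$ being precisely the ``rotations preserve area'' fact the paper invokes.
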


\begin{proof}
Use the exterior decomposition induced by $\g=\fb\oplus V\oplus\fz$ and write uniquely

\[
r=s+p+q+c,\qquad s\in\Lambda^{2}\fb,\ q\in\Lambda^{2}V,\ c\in\fz\wedge\g.
\]
Fix $\ell$. If $p_{\ell}\ne 0$, pick a factorisation $p_{\ell}=h_{\ell}\wedge u_{\ell}$ with $h_{\ell}\ne 0$ and $u_{\ell}\ne 0$. The planewise identity \eqref{eq:planewise1} yields
\begin{equation}\label{eq:block-projection}
[p_{\ell},p_{\ell}]=2\lambda_{\ell}(h_{\ell})\|u_{\ell}\|^{2}\,h_{\ell}\wedge\Omega_{\ell}\in\fb\wedge\Lambda^{2}P_{\ell}.
\end{equation}
We claim that no other term in $[r,r]$ contributes to $\fb\wedge\Lambda^{2}P_{\ell}$. Indeed:
\begin{itemize}[leftmargin=1.6em,itemsep=1pt]
\item brackets between distinct blocks $p_{i}$ and $p_{j}$ lie in $\fb\wedge P_{i}\wedge P_{j}$ (Lemma~\ref{lem:planewise});
\item $[p,q]\in\Lambda^{3}V\oplus(\fb\wedge V\wedge\fz)$ and $[q,q]\in\fz\wedge\Lambda^{2}V$;
\item every bracket involving $c$ retains a factor in $\fz$ by centrality;
\item $[s,s]=0$ because $\fb$ is abelian, and $[s,p]\in\Lambda^{2}\fb\wedge V$;
\item decompose $q=\sum_{a}q_{aa}+\sum_{i<j}q_{ij}$ with $q_{aa}\in\Lambda^{2}P_{a}=\mathbb R\Omega_{a}$ and $q_{ij}\in P_{i}\wedge P_{j}$; since every rotation $\rho(b)|_{P_{a}}$ preserves $\Omega_{a}$, $[s,q_{aa}]=0$, whereas $[s,q_{ij}]\in\fb\wedge P_{i}\wedge P_{j}$.
\end{itemize}
None of these summands lies in $\fb\wedge\Lambda^{2}P_{\ell}$. Therefore

\[
\pr_{\fb\wedge\Lambda^{2}P_{\ell}}[r,r]=2\lambda_{\ell}(h_{\ell})\|u_{\ell}\|^{2}\,h_{\ell}\wedge\Omega_{\ell}.
\]
If $[r,r]=0$, the right-hand side vanishes; since $h_{\ell}\ne 0$ and $u_{\ell}\ne 0$, this forces $\lambda_{\ell}(h_{\ell})=0$. Thus $p\in\Rzero$. Parts (a)--(c) follow at once, and part (d) is immediate because in dimension one every block has tensor rank at most one and $\ker\lambda_{\ell}=0$.
\end{proof}

\begin{remark}[What fails in higher block rank]\label{rem:scope}
In an oriented orthonormal basis $(e_{\ell},f_{\ell})$, a general block of tensor rank two has the form

\[
p_{\ell}=x_{\ell}\wedge e_{\ell}+y_{\ell}\wedge f_{\ell},\qquad x_{\ell},y_{\ell}\in\fb,
\]
and a direct calculation from \eqref{eq:schouten2} and \eqref{eq:beta} gives
\begin{equation}\label{eq:higher-rank-block}
[p_{\ell},p_{\ell}]=2\bigl(\lambda_{\ell}(x_{\ell})x_{\ell}+\lambda_{\ell}(y_{\ell})y_{\ell}\bigr)\wedge\Omega_{\ell}
+2\lambda_{\ell}\wedge x_{\ell}\wedge y_{\ell},
\end{equation}
where $\lambda_{\ell}$ is viewed as an element of $\fz=\fb^{*}$. The first term no longer lies on a fixed ray and may vanish on nontrivial rank-two data, while the second couples the block to the central exterior sector $\fz\wedge\Lambda^{2}\fb\subset\Lambda^{3}\g$. Both positivity and the support-isolation argument break at the same point, and \eqref{eq:higher-rank-block} identifies the precise obstruction to extending the present method.
\end{remark}

\begin{remark}[Modified Yang-Baxter equations]\label{rem:modified}
The proof of Theorem~\ref{thm:obstruction} uses only the planewise projections. The same conclusion therefore holds for $[r,r]=\Theta$ whenever $\pr_{\fb\wedge\Lambda^{2}P_{\ell}}\Theta=0$ for every $\ell$. A modified classical Yang-Baxter equation of this shape inherits the rank-one support obstruction. If instead the prescribed invariant three-tensor has a nonzero radial projection, no support constraint follows from the argument.
\end{remark}

\begin{remark}[Consequence for Yang-Baxter deformations]\label{rem:physics}
A homogeneous Yang-Baxter deformation starts from a skew tensor satisfying \eqref{eq:cybe}. For oscillator and Nappi-Witten algebras, Theorem~\ref{thm:obstruction}(d) removes the entire $h\wedge V$ sector from that input, uniformly in frequencies. The algebraic search for triangular deformations may therefore be restricted from the outset to

\[
r\in\Lambda^{2}V+\fz\wedge\g.
\]
This is compatible with the explicit Nappi-Witten and $H_{4}$ analyses of \cite{KyonoYoshida2016,DemulderEtAl2021} and with the general framework of \cite{BorsatoWulff2019,HoareLacroix2020}; it is a support-level constraint on $r$, not a claim about the conformal invariance, integrability, or physical equivalence of the remaining deformations.
\end{remark}

\section{Two-plane fibres and oscillator comparison}\label{sec:twoplane}

The case of exactly two active planes exhausts the qualitative angular behaviours and admits closed-form calculations, providing a concrete illustration of Theorems~\ref{thm:structure} and~\ref{thm:monomial}.

\subsection{The two-plane trichotomy}

\begin{theorem}[Two-plane trichotomy]\label{thm:trichotomy}
Let $m=2$ and $Y=t_{1}E_{1}(H)+t_{2}E_{2}(H)$ with $t_{1},t_{2}>0$. Exactly one of the following holds.
\begin{enumerate}
\item[\rm (i)] \emph{(No interaction.)} If $\{1,2\}$ is not an edge of $\Gamma_{H}$, then $A_{H,Y}=\varnothing$, $F_{Y}$ is constant, $\rk dF_{Y}=0$, and $K_{A}=\mathbb T^{2}$.
\item[\rm (ii)] \emph{(Cancellation.)} If $\{1,2\}$ is an edge and $\hh_{2}=\tau\hh_{1}$ with $\tau\in\{\pm 1\}$, then $A_{H,Y}=\{\pm(e_{1}-\tau e_{2})\}$, $\rk dF_{Y}=1$, and

\[
K_{A}=\{(\theta_{1},\theta_{2})\in\mathbb T^{2}:\theta_{1}-\tau\theta_{2}=0\}.
\]
In scalar coordinates $y_{+},y_{-}$ on the two surviving coefficient lines, the complexified image is $y_{+}y_{-}=a_{+}a_{-}$.
\item[\rm (iii)] \emph{(No cancellation.)} If $\{1,2\}$ is an edge and no cancellation occurs, then $A_{H,Y}=\{\pm e_{1}\pm e_{2}\}$, $\rk dF_{Y}=2$, and $K_{A}=\{(0,0),(\pi,\pi)\}\cong\mathbb Z/2\mathbb Z$. With coordinates indexed by sign pairs, the complexified image is $y_{++}y_{--}=a_{++}a_{--}$ and $y_{+-}y_{-+}=a_{+-}a_{-+}$.
\end{enumerate}
The joint exponent polytopes in the three cases are respectively $\varnothing$, a line segment, and the square $\conv\{(\pm 1,\pm 1)\}$.
\end{theorem}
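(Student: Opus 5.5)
The plan is to specialise the general machinery of Section~\ref{sec:monomial} to $I=\{1,2\}$. Lemma~\ref{lem:cancellation} sorts the configurations, Theorem~\ref{thm:monomial} supplies rank and isotropy, and Proposition~\ref{prop:binomial} together with Remark~\ref{rem:smith} supplies the binomial equations.

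Since $t_1,t_2>0$, the active support is $I=\{1,2\}$ and $A_{H,Y}\subset\{\pm e_1\pm e_2\}$ is read off directly from the signed coefficients $c_{12}^{\varepsilon\delta}(H)$ of \eqref{eq:coeff}. The case split is exhaustive and exclusive: either $\{1,2\}$ is a non-edge, or it is an edge with cancellation, or an edge without cancellation.

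\textbf{Case (i).} If $\{1,2\}$ is not an edge, Lemma~\ref{lem:cancellation} says that all four coefficients vanish. Hence $A_{H,Y}=\varnothing$ and $\mu_H\equiv0$ on $T_Y$, so $F_Y$ is constant. The stated conventions then give $\rk L_A=0$, $K_A=\mathbb T^2$ and an empty polytope.

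\textbf{Case (ii).} If $\{1,2\}$ is an edge and $\hh_2=\tau\hh_1$, then by \eqref{eq:cancel} exactly the coefficients with $\delta/\varepsilon=\tau$ vanish, namely the pair $\pm(e_1+\tau e_2)$. The survivors are $\varepsilon e_1-\varepsilon\tau e_2=\pm(e_1-\tau e_2)$. The cancellation criterion forces a unique $\tau$, because $\hh_1\neq0$ means $\hh_1=-\hh_1$ is impossible. Consequently:
\begin{itemize}[leftmargin=1.6em,itemsep=1pt]
\item $L_A=\mathbb Z(e_1-\tau e_2)$, which is saturated because the vector is primitive, so Theorem~\ref{thm:monomial}(a) gives rank $1$;
\item $K_A$ is the connected circle $\theta_1-\tau\theta_2=0$ in $\mathbb T^2$;
\item $\Lambda_A=\mathbb Z(1,1)$, so Proposition~\ref{prop:binomial} gives $y_+y_-=a_+a_-$;
\item the polytope is the segment joining the two antipodal exponents.
\end{itemize}

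\textbf{Case (iii).} If $\{1,2\}$ is an edge without cancellation, all four exponents $\pm e_1\pm e_2$ survive. This is exactly Corollary~\ref{cor:graph} with one connected component of size two, giving rank $2$ and $K_A=\{0,(\pi,\pi)\}$. Directly, $L_A=\{x\in\mathbb Z^2:x_1+x_2\equiv0\bmod 2\}$ has index $2$ in $\mathbb Z^2$, matching Smith form $\diag(1,2)$. The kernel $\Lambda_A\subset\mathbb Z^4$ has rank $2$, with basis given by the antipodal sums $(++)+(--)$ and $(+-)+(-+)$. That these generate over $\mathbb Z$ can be checked by hand: any relation $\sum c_\alpha\alpha=0$ forces $c_{++}=c_{--}$ and $c_{+-}=c_{-+}$. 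Proposition~\ref{prop:binomial} then yields the two binomials, and the polytope is the square.

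The main care point is case (ii), where one must confirm that exactly one opposite pair cancels and identify which sign pair survives; Lemma~\ref{lem:cancellation} already settles this. The remaining steps are direct lattice computations.
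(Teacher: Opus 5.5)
Your proposal is correct and follows essentially the same route as the paper: Lemma~\ref{lem:cancellation} determines which signed exponents survive, Theorem~\ref{thm:monomial} and Corollary~\ref{cor:graph} give the rank and the isotropy, and the binomials come from bases of the relation lattices. The paper leaves the details implicit, and you supply them explicitly: the uniqueness of $\tau$, the primitivity of $e_{1}-\tau e_{2}$, the Smith form $\diag(1,2)$, and the check that $c_{++}=c_{--}$ and $c_{+-}=c_{-+}$ in case (iii).
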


\begin{proof}
Case (i) is immediate from \eqref{eq:coeff}. In cases (ii)--(iii), Lemma~\ref{lem:cancellation} determines which coefficients survive; rank and kernel come from Theorem~\ref{thm:monomial} and Corollary~\ref{cor:graph}. The displayed binomials are bases of the respective relation lattices.
\end{proof}

\begin{example}[A resonant oscillator pair]\label{ex:resonant}
Let $\fb=\mathbb Rh$, $h^{*}(h)=1$, and take two equal positive frequencies $\lambda_{1}=\lambda_{2}=\nu h^{*}$ with $\nu>0$ and marking $H=(h,h)$. Then $\hh_{1}=\hh_{2}=h/\nu$, so the unique edge is a cancellation edge of type $\tau=1$. For every full-support radial value,

\[
A_{H,Y}=\{\pm(e_{1}-e_{2})\},\qquad K_{A}=\{(\theta_{1},\theta_{2}):\theta_{1}=\theta_{2}\}.
\]
The angular map depends only on the relative phase $\theta_{1}-\theta_{2}$; its image is a circle, and its complexification is the translated hyperbola $y_{+}y_{-}=a_{+}a_{-}$. This is the simplest resonant configuration excluded by a distinct-frequency hypothesis and displays explicitly the rank drop of Theorem~\ref{thm:monomial}.
\end{example}

\begin{figure}[t]
\centering
\resizebox{\textwidth}{!}{%
\begin{tikzpicture}[>=Latex,line cap=round,line join=round,font=\scriptsize]
  \begin{scope}[xshift=0cm]
    \node[font=\small\bfseries] at (2.15,3.05) {(a) radial cone};
    \fill[MidnightBlue!14] (0.45,0.45)--(3.55,2.35)--(3.75,0.80)--cycle;
    \draw[->,very thick,MidnightBlue] (0.45,0.45)--(3.80,2.55) node[above left] {$E_{1}(H)$};
    \draw[->,very thick,MidnightBlue] (0.45,0.45)--(4.00,0.82) node[right] {$E_{2}(H)$};
    \node[left] at (0.45,0.45) {$0$};
    \node[MidnightBlue,font=\normalsize] at (2.65,1.25) {$C_{H}$};
  \end{scope}
  \draw[gray!55] (4.45,0.15)--(4.45,3.15);
  \begin{scope}[xshift=4.85cm]
    \node[font=\small\bfseries] at (2.15,3.05) {(b) radial fibre};
    \draw[very thick] (2.15,1.55) ellipse (1.75 and 0.88);
    \draw[very thick] (2.15,1.55) ellipse (0.68 and 0.32);
    \draw[dashed,MidnightBlue,thick,->] (0.48,1.55) arc[start angle=180,end angle=350,x radius=1.67,y radius=0.72];
    \draw[MidnightBlue,thick,->] (2.15,1.23) arc[start angle=-90,end angle=245,x radius=0.68,y radius=0.32];
    \node[align=center] at (2.15,0.22) {$T_{Y}=S^{1}_{\sqrt{t_{1}}}\times S^{1}_{\sqrt{t_{2}}}$};
  \end{scope}
  \draw[gray!55] (9.35,0.15)--(9.35,3.15);
  \begin{scope}[xshift=9.70cm]
    \node[font=\small\bfseries] at (2.55,3.05) {(c) two-plane exponent polytopes};
    \draw[dashed,gray] (0.15,0.72) rectangle (1.25,1.82);
    \node[MidnightBlue,font=\large] at (0.70,1.27) {$\varnothing$};
    \node[align=center] at (0.70,0.35) {empty};
    \draw[very thick,MidnightBlue] (1.72,1.27)--(2.88,1.27);
    \fill[MidnightBlue] (1.72,1.27) circle (1.5pt) (2.88,1.27) circle (1.5pt);
    \node[align=center] at (2.30,0.35) {segment};
    \draw[very thick,MidnightBlue] (3.40,0.78) rectangle (4.65,1.76);
    \fill[MidnightBlue] (3.40,0.78) circle (1.5pt) (3.40,1.76) circle (1.5pt)
      (4.65,0.78) circle (1.5pt) (4.65,1.76) circle (1.5pt);
    \node[align=center] at (4.03,0.35) {square};
  \end{scope}
\end{tikzpicture}%
}
\caption{The geometry of the restricted Schouten square: the simplicial radial cone $C_{H}$, a product-of-circles radial fibre $T_{Y}$, and the empty, segment, and square joint exponent polytopes of the two-plane trichotomy.}
\label{fig:geometry}
\end{figure}

\subsection{A closed-form real fibre}

Take $\fb=\mathbb Rb_{1}\oplus\mathbb Rb_{2}$, $\lambda_{i}=b_{i}^{*}$, and choose the marking $H'=(b_{1}+b_{2},b_{2})$. Then
\begin{equation}\label{eq:muHprime}
\mu_{H'}(r)=2\,b_{2}\wedge u_{1}\wedge J_{2}u_{2},
\end{equation}
so the angular type is case (iii) of Theorem~\ref{thm:trichotomy}.

\begin{proposition}[Closed-form two-plane fibre]\label{prop:closedfibre}
Fix $t_{1},t_{2}>0$ and put $Y=t_{1}E_{1}(H')+t_{2}E_{2}(H')$. In oriented orthonormal bases $(e_{\ell},f_{\ell})$ write $\Theta_{\mathrm{mix}}=b_{2}\wedge\sum_{p,q}m_{pq}\,v_{1,p}\wedge v_{2,q}$, where $(v_{\ell,1},v_{\ell,2})=(e_{\ell},f_{\ell})$, and set $M=(m_{pq})$. The equation $[r,r]=Y+\Theta_{\mathrm{mix}}$ has a solution $r\in T_{Y}$ iff
\begin{equation}\label{eq:detM}
\det M=0,\qquad \|M\|_{F}=2\sqrt{t_{1}t_{2}}.
\end{equation}
The solution set then consists of exactly two points, exchanged by the simultaneous half-turn $(\theta_{1},\theta_{2})\mapsto(\theta_{1}+\pi,\theta_{2}+\pi)$.
\end{proposition}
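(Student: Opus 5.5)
The plan is to make \eqref{eq:muHprime} completely explicit on the radial fibre $T_{Y}$ and read off the mixed coefficient matrix as a rank-one outer product; the statement then reduces to the elementary description of rank-one $2\times 2$ matrices of prescribed Frobenius norm.

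\emph{Step 1: reduction to an equation in $X_{H'}$.} First I would confirm \eqref{eq:muHprime} from \eqref{eq:muH}. For $H'=(b_{1}+b_{2},b_{2})$ one has $\lambda_{2}(h_{1})=1$ and $\lambda_{1}(h_{2})=0$, so only the first term of \eqref{eq:muH} survives, giving $\mu_{H'}(r)=2\,b_{2}\wedge u_{1}\wedge J_{2}u_{2}$. By Theorem~\ref{thm:structure}(a), every $r\in T_{Y}$ has $\Phi_{H'}(r)=Y$. Since $D_{H'}\perp X_{H'}$, the equation $[r,r]=Y+\Theta_{\mathrm{mix}}$ on $T_{Y}$ is therefore equivalent to $\mu_{H'}(r)=\Theta_{\mathrm{mix}}$.

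\emph{Step 2: the outer-product form.} I parametrise $T_{Y}$ as in \eqref{eq:uell}:
\[
u_{1}=\sqrt{t_{1}}\,(\cos\theta_{1}e_{1}+\sin\theta_{1}f_{1}),\qquad
J_{2}u_{2}=\sqrt{t_{2}}\,(-\sin\theta_{2}e_{2}+\cos\theta_{2}f_{2}).
\]
Expanding $u_{1}\wedge J_{2}u_{2}$ in the basis $v_{1,p}\wedge v_{2,q}$ of $P_{1}\wedge P_{2}$, and using that $b_{2}\wedge(\cdot)$ is injective on $P_{1}\wedge P_{2}$, the equation becomes the matrix identity
\[
M=2\,\mathbf a\,\mathbf c^{\mathsf T},\qquad
\mathbf a=\sqrt{t_{1}}(\cos\theta_{1},\sin\theta_{1}),\quad
\mathbf c=\sqrt{t_{2}}(-\sin\theta_{2},\cos\theta_{2}).
\]

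\emph{Step 3: necessity.} If a solution exists, then $M$ has rank one, so $\det M=0$. Moreover $\|M\|_{F}=2\|\mathbf a\|\,\|\mathbf c\|=2\sqrt{t_{1}t_{2}}$, which is \eqref{eq:detM}.

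\emph{Step 4: sufficiency and the count of solutions.} Conversely, suppose \eqref{eq:detM} holds. Then $M\neq 0$ and $\rk M=1$, so $M=\sigma\,\mathbf p\,\mathbf q^{\mathsf T}$ with $\mathbf p,\mathbf q$ unit vectors and $\sigma=\|M\|_{F}=2\sqrt{t_{1}t_{2}}$. This factorisation is unique up to the simultaneous sign change $(\mathbf p,\mathbf q)\mapsto(-\mathbf p,-\mathbf q)$. Each unit vector in $\mathbb R^{2}$ determines a unique angle in $\mathbb T$:
\begin{itemize}[leftmargin=1.6em,itemsep=1pt]
\item $\mathbf p=(\cos\theta_{1},\sin\theta_{1})$ fixes $\theta_{1}$;
\item $\mathbf q=(-\sin\theta_{2},\cos\theta_{2})$ fixes $\theta_{2}$.
\end{itemize}
Hence the solution set in $T_{Y}$ has exactly two points. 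The sign change $(\mathbf p,\mathbf q)\mapsto(-\mathbf p,-\mathbf q)$ corresponds to $(\theta_{1},\theta_{2})\mapsto(\theta_{1}+\pi,\theta_{2}+\pi)$. This matches $K_{A}\cong\mathbb Z/2\mathbb Z$ in Theorem~\ref{thm:trichotomy}(iii), which serves as a consistency check.

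\emph{Main obstacle.} The only delicate point is the uniqueness of the rank-one factorisation up to sign. I would handle it by writing $\mathbf p$ as the normalised column space and $\mathbf q$ as the normalised row space of $M$; these are determined up to a common sign by $M$ itself, which is what forces exactly two solutions.
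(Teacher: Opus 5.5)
Your proposal is correct and follows the paper's proof: the paper also expands \eqref{eq:muHprime} on $T_{Y}$ to get $M=2\sqrt{t_{1}t_{2}}\,(\cos\theta_{1},\sin\theta_{1})^{T}(-\sin\theta_{2},\cos\theta_{2})$, and then uses that a rank-one factorisation into unit vectors is unique up to $(p,q)\mapsto(-p,-q)$. Your Step 1 spells out two points the paper leaves implicit: the reduction to $\mu_{H'}(r)=\Theta_{\mathrm{mix}}$ via the direct sum $D_{H'}\oplus X_{H'}$, and the injectivity of $b_{2}\wedge(\cdot)$ on $P_{1}\wedge P_{2}$.
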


\begin{proof}
Writing $u_{\ell}=\sqrt{t_{\ell}}(\cos\theta_{\ell}e_{\ell}+\sin\theta_{\ell}f_{\ell})$ and expanding \eqref{eq:muHprime} gives

\[
M=2\sqrt{t_{1}t_{2}}\begin{pmatrix}\cos\theta_{1}\\ \sin\theta_{1}\end{pmatrix}
\begin{pmatrix}-\sin\theta_{2}&\cos\theta_{2}\end{pmatrix}.
\]
Thus $M$ has rank one and Frobenius norm $2\sqrt{t_{1}t_{2}}$. Conversely, every real rank-one $2\times 2$ matrix of that norm factors as $2\sqrt{t_{1}t_{2}}\,pq^{T}$ for unit vectors $p,q$, uniquely up to $(p,q)\mapsto(-p,-q)$; the two factorisations give the two phase points exchanged by the simultaneous half-turn.
\end{proof}

\subsection{Oscillator algebras: a genericity-free support theorem}

Specialise to $\fb=\mathbb Rh$, write $\nu_{\ell}=\lambda_{\ell}(h)>0$, and let $z\in\fb^{*}$ be dual to $h$. The brackets are
\begin{equation}\label{eq:oscillator-bracket}
[h,e_{\ell}]=\nu_{\ell}f_{\ell},\qquad [h,f_{\ell}]=-\nu_{\ell}e_{\ell},\qquad [e_{\ell},f_{\ell}]=\nu_{\ell}z.
\end{equation}
This is the oscillator algebra with frequencies $\nu_{1},\dots,\nu_{m}$; the case $m=1$, $\nu_{1}=1$ is the Nappi-Witten algebra \cite{NappiWitten1993}.

\begin{proposition}[Oscillator support rigidity]\label{prop:oscillator}
For arbitrary positive frequencies $\nu_{1},\dots,\nu_{m}$, every solution $r\in\Lambda^{2}\g$ of the classical Yang-Baxter equation on the oscillator algebra \eqref{eq:oscillator-bracket} has zero component in $h\wedge V$.
\end{proposition}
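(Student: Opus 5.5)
The plan is to deduce the statement as the specialisation $\fb=\mathbb Rh$ of Theorem~\ref{thm:obstruction}(d), and to check directly that nothing in that argument relies on frequency genericity.

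First, the oscillator algebra \eqref{eq:oscillator-bracket} must be recognised as a rotational double extension in the sense of Definition~\ref{def:datum}. We take $\fb=\mathbb Rh$, the planes $P_{\ell}=\spanR\{e_{\ell},f_{\ell}\}$, and the weights $\lambda_{\ell}=\nu_{\ell}z$, which are nonzero because $\nu_{\ell}>0$. Then $\rho(h)|_{P_{\ell}}=\nu_{\ell}J_{\ell}$, and \eqref{eq:beta} gives $\beta(e_{\ell},f_{\ell})(h)=\langle\nu_{\ell}f_{\ell},f_{\ell}\rangle=\nu_{\ell}$, i.e.\ $[e_{\ell},f_{\ell}]=\nu_{\ell}z$. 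Cross-plane brackets vanish because $\rho(h)$ preserves each $P_{\ell}$ and the planes are orthogonal. No condition on the $\nu_{\ell}$ is used here; repeated or resonant frequencies are allowed.

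Next, take an arbitrary $r\in\Lambda^{2}\g$ and its component $p=\pr_{\fb\wedge V}(r)=\sum_{\ell}h\wedge u_{\ell}$. Since $\fb$ is one-dimensional, each block $p_{\ell}\in\fb\otimes P_{\ell}$ has tensor rank at most one, so $p\in\Rone$ automatically. Each nonzero block then has marker line $\mathbb Rh$, and $\lambda_{\ell}(h)=\nu_{\ell}>0$. Hence $\mathbb Rh\not\subset\ker\lambda_{\ell}$ for every $\ell$, and in fact $\Rzero=\{0\}$ and $p\in\Rplus$.

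Finally, Theorem~\ref{thm:obstruction} applies: $[r,r]=0$ forces $p\in\Rzero$, so $p=0$. The step that needs care is confirming that the support-isolation claim in that proof needs no hypothesis on the $\nu_{\ell}$, namely that the projection of $[r,r]$ to $\fb\wedge\Lambda^{2}P_{\ell}$ equals $2\nu_{\ell}\|u_{\ell}\|^{2}h\wedge\Omega_{\ell}$ for every $\ell$. For this I would go through the list in that proof. Cross-block terms land in $\fb\wedge P_{i}\wedge P_{j}$ with $i\ne j$, and this holds even when $\nu_{i}=\nu_{j}$, because only the exterior support matters and not whether coefficients cancel. Terms involving $q$ or $c$ carry a $\Lambda^{3}V$ or a $\fz$ factor. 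The term $[s,\cdot]$ vanishes since $\Lambda^{2}\fb=0$ when $\dim\fb=1$. With these checks, the projection identity gives $\nu_{\ell}\|u_{\ell}\|^{2}=0$, hence $u_{\ell}=0$ for every $\ell$.
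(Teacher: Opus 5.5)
Your proposal is correct and follows the paper's route. The paper's proof is exactly the observation that $\dim\fb=1$ makes every block have tensor rank at most one and every $\ker\lambda_{\ell}$ trivial, so Theorem~\ref{thm:obstruction}(d) applies; your extra checks (identifying the oscillator brackets with the rotational datum $\lambda_{\ell}=\nu_{\ell}z$, and retracing the support isolation, where $\Lambda^{2}\fb=0$ removes the $s$-terms) are sound but already contained in that theorem, whose proof uses no frequency hypothesis.
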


\begin{proof}
Since $\dim\fb=1$, every block of the $\fb\wedge V$ component has tensor rank at most one, and every $\lambda_{\ell}$ has trivial kernel. Theorem~\ref{thm:obstruction}(d) applies.
\end{proof}

Under the distinct-frequency hypothesis of \cite{BoucettaMedina2011} Proposition~\ref{prop:oscillator} is the support projection of the complete classification. Its content is the removal of every genericity assumption: the argument requires neither distinct frequencies nor the exclusion of additive resonances. The radial and angular results of Sections~\ref{sec:structure}--\ref{sec:monomial} then give further information about nonzero level sets of the Schouten square, including the resonant cancellation stratum of Example~\ref{ex:resonant}. Related Poisson and bialgebra structures on oscillator algebras appear in \cite{AlbuquerqueEtAl2021,BallesterosHerranz1996,BallesterosHerranz1997}.

\section{Concluding remarks}\label{sec:conclusion}

The restricted Schouten square on a positive marked slice carries a rigid two-level toric geometry. Its radial component is an injective linear image of a toric moment map with simplicial image $C_{H}$ and sharp quadratic coercive estimate $\kappa_{0}(H)$; its angular component is a Laurent monomial map whose exponent lattice $L_{A}$ makes every fibre invariant algorithmic via Smith normal form. This separation delivers more than a non-vanishing statement: it describes the image and all affine fibres of $\Sigma_{H}$, proves properness and compactness, produces a fixed-support constant-rank stratification, and yields defining binomial equations for the Zariski closure of the complexified angular image. Cancellation is the sole mechanism of angular rank drop, and its strata are explicit semialgebraic submanifolds of the marking space.

The Yang-Baxter consequence is best read as support rigidity. Once the $\fb\wedge P_{\ell}$ blocks have tensor rank at most one, triangularity of a completely arbitrary bivector forces every marker detected by the corresponding weight to disappear, regardless of $\Lambda^{2}\fb$, $\Lambda^{2}V$, and central-wedge components. For $\dim\fb=1$ the rank-one condition is automatic; the $h\wedge V$ sector is therefore excluded for every frequency pattern, providing a genericity-free algebraic reduction for the classical $r$-matrices entering Yang-Baxter deformations of oscillator and Nappi-Witten targets.

Two structural extensions are natural, and \eqref{eq:higher-rank-block} identifies their obstacles. First, higher tensor rank in a two-plane block replaces the positive ray by the vector-valued expression

\[
2\bigl(\lambda_{\ell}(x_{\ell})x_{\ell}+\lambda_{\ell}(y_{\ell})y_{\ell}\bigr)\wedge\Omega_{\ell}
+2\lambda_{\ell}\wedge x_{\ell}\wedge y_{\ell},
\]
which is neither confined to a fixed ray nor supported inside $\fb\wedge\Lambda^{2}V$. Any extension must simultaneously control the loss of radial positivity and the appearance of central three-form components. Second, if a rotation block is a higher-dimensional Hermitian space, $u\wedge Ju$ is no longer confined to a one-dimensional area line; the radial image becomes matrix-valued rather than simplicial, and the angular phase space is no longer described by a product of one-dimensional characters. Both are genuine structural transitions rather than routine variations of the present analysis.

\section*{Data availability}
No datasets were generated or analysed in this study.

\end{document}